\font\smallit=cmti10
\newcommand{\absn}[1]{\left\| #1 \right\|}
\renewcommand\section{\@startsection {section}{1}{\z@}
{-30pt \@plus -1ex \@minus -.2ex}
{2.3ex \@plus.2ex}
{\normalfont\normalsize\bfseries\boldmath}}
\renewcommand\subsection{\@startsection{subsection}{2}{\z@}
{-3.25ex\@plus -1ex \@minus -.2ex}
{1.5ex \@plus .2ex}
{\normalfont\normalsize\bfseries\boldmath}}
\renewcommand{\@seccntformat}[1]{\csname the#1\endcsname. }
\newtheorem{theorem}{Theorem}
\newtheorem{lemma}{Lemma}
\theoremstyle{definition}
\newtheorem{definition}{Definition}
\DeclarePairedDelimiter\autoset{\{}{\}}
\newcommand{\set}[1]{\autoset*{#1}}
\DeclarePairedDelimiter\absolute{|}{|}
\newcommand{\abs}[1]{\absolute*{#1}}
\DeclarePairedDelimiter\autobracket{(}{)}
\newcommand{\br}[1]{\autobracket*{#1}}
\begin{document}

\begin{center}
\uppercase{\bf On a family of cubic Thue Equations involving Fibonacci and Lucas numbers}
\vskip 20pt
{\bf Tobias Hilgart, Ingrid Vukusic and Volker Ziegler}\\
{\smallit Department of Mathematics, University of Salzburg, Austria}\\
\end{center}
\vskip 20pt
\vskip 30pt

\centerline{\bf Abstract}
\noindent
Let $F_n$ denote the $n$-th Fibonacci number and $L_n$ the $n$-th Lucas number. We completely solve the family of cubic Thue equations 
$${(X-F_nY)(X-L_nY)X-Y^3=\pm1}$$
and show that there are no non-trivial solutions for $n\neq 1,3$.

\pagestyle{myheadings}
\thispagestyle{empty}
\baselineskip=12.875pt
\vskip 30pt

\section{Introduction}

There is a vast amount of research regarding the resolution of Diophantine equations of the form
\[
    F(X,Y) = m,
\]
where $F \in \mathbb{Z}[X,Y]$ is an irreducible form of degree at least $3$ and $m \neq 0$ a fixed integer. A. Thue \cite{thue_init} proved that there are only finitely many solutions, but used non-effective methods and did not give bounds for the size of possible solutions. Based on his theory of linear forms in logarithms, A. Baker \cite{Baker-Thue} was able to give such effective bounds, which have been refined many times since.


E. Thomas \cite{thomas_init} studied parametrised families of Thue equations of the form
\[
    X\prod_{i=2}^N \br{ X-p_i(a)Y } - Y^N = \pm 1,
\]
with monic polynomials $p_i \in \mathbb{Z}[a]$. He conjectured that if $0 < \deg p_2 < \deg p_3 < \cdots < \deg p_N$, then there is a constant $a_0$ such that for all integers $a \geq a_0$ the equation has only solutions with $\abs{y} \leq 1$, which are called the trivial solutions.
Thomas proved his conjecture in the case $N=3$ under some technical hypothesis. Heuberger \cite{heub_thom} proved the conjecture in the general case, again under some technical hypothesis.
Indeed, at least some of the hypotheses are necessary, as Ziegler \cite{ziegler_ex} provided a counter example to Thomas' conjecture in the cubic case. 

In this paper, we consider a specific family of Thue equations. While Thomas and Heuberger considered polynomially parametrised families of Thue equations, we consider Thue equations which are parametrised by linear recurrence sequences. More specifically we consider the Fibonacci sequence defined by $F_0=0, F_1=1$ and $F_{n+2}=F_{n+1}+F_n$ for $n\geq 0$ and the Lucas sequence defined by $L_0=2, L_1=1$ and $L_{n+2}=L_{n+1}+L_n$ for $n\geq 0$ and we prove the following theorem:

\begin{theorem}\label{thm:main}
    Let $F_n$ denote the $n$-th Fibonacci number and $L_n$ the $n$-th Lucas number. The family of cubic Thue equations
    \begin{equation}\label{eq:thue}
        (X-F_nY)(X-L_nY)X-Y^3 = \pm 1
    \end{equation}
    has only the trivial solutions
    \begin{equation*}
        \pm\set{(1,0), (0,1), (F_n,1), (L_n,1)}
    \end{equation*}
    with the exception of $n \in \set{1,3}$. In those cases, one has the additional solutions $\pm\set{(2,1),(7,4)}$ and $\pm\set{(7,4), (38,273)}$ respectively.
\end{theorem}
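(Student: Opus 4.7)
The plan is to translate Equation \eqref{eq:thue} into a unit equation in the totally real cubic number field $K_n = \mathbb{Q}(\alpha)$, where $\alpha$ is a root of $f(x) = (x-F_n)(x-L_n)x - 1$. For $n$ large enough, a standard Rouché-type argument shows that the three real roots $\alpha^{(1)},\alpha^{(2)},\alpha^{(3)}$ of $f$ cluster near $L_n$, $F_n$ and $1/(F_n L_n)$ respectively, so $K_n$ is totally real cubic. Because the form on the left of \eqref{eq:thue} equals $N_{K_n/\mathbb{Q}}(X-\alpha Y)$, every solution $(X,Y)$ yields a unit $X-\alpha Y \in \mathbb{Z}[\alpha]^{\times}$. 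The identity $\alpha(\alpha-F_n)(\alpha-L_n)=1$ furnishes three natural units whose product is $1$; I would take $\eta_1=\alpha-F_n$ and $\eta_2=\alpha-L_n$ as candidate fundamental units.

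\textbf{Fundamental units and linear forms in logarithms.} First I would prove that $\{\eta_1,\eta_2\}$ is a system of fundamental units of the order $\mathbb{Z}[\alpha]$, by estimating the regulator $R(\eta_1,\eta_2)$ from the asymptotics in paragraph one and invoking an index bound of the Pohst--Zimmer type (or a Voutier-type lower bound on heights), so that any ``smaller'' units would force an impossibly small regulator. Writing then
\begin{equation*}
    X-\alpha Y = \pm\,\eta_1^{a_1}\eta_2^{a_2},
\end{equation*}
taking three Galois conjugates and eliminating $X$ and $Y$, one obtains a Siegel-type identity that produces a linear form in three logarithms
\begin{equation*}
    \Lambda = a_1\log\br{\eta_1^{(i)}/\eta_1^{(j)}} + a_2\log\br{\eta_2^{(i)}/\eta_2^{(j)}} + \log\br{\tfrac{\alpha^{(k)}-\alpha^{(i)}}{\alpha^{(k)}-\alpha^{(j)}}}.
\end{equation*}
By choosing the indices so that $\alpha^{(k)}$ is closest to $X/Y$, one shows $|\Lambda|$ is exponentially small in $A:=\max(|a_1|,|a_2|)$. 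Matveev's theorem then yields $\log|\Lambda| \ge -C\log F_n\cdot \log A$, which when combined with the upper bound gives $A \le A_0(n)$ growing only polylogarithmically in $n$.

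\textbf{Reduction and small cases.} Because the exponent bound $A_0(n)$ is still too large to enumerate, I would use the asymptotic expansions of the roots and the units to refine the upper bound on $|\Lambda|$: the dominant cancellation between $\eta_i^{(j)}/\eta_i^{(k)}$ and $\alpha^{(j)}-\alpha^{(k)}$ forces sharp congruences on $(a_1,a_2)$ modulo powers of $F_n$, and these typically suffice (in the Thomas--Heuberger tradition) to conclude $(a_1,a_2)\in\{(0,0),(1,0),(0,1),(-1,-1)\}$, which correspond exactly to the listed trivial solutions. This handles all $n\ge n_0$ for some explicit $n_0$. The remaining finitely many values $n<n_0$ are dealt with individually: each becomes a concrete cubic Thue equation, solvable by the standard Bilu--Hanrot algorithm as implemented in PARI/GP or Magma, which will also produce the sporadic solutions for $n\in\{1,3\}$.

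\textbf{Main obstacle.} The most delicate step is the proof that $\{\eta_1,\eta_2\}$ is fundamental and that the subsequent sharpened analysis is strong enough to rule out all non-trivial $(a_1,a_2)$ uniformly in $n$; Baker's raw bound alone will not close the gap, so a careful double-exponential/polynomial comparison exploiting the specific Fibonacci--Lucas arithmetic (in particular the identity $F_n+L_n=2F_{n+1}$ and $F_nL_n=F_{2n}$) will be essential.
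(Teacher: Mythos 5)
Your setup (root asymptotics, unit equation, Siegel's identity, a linear form in three logarithms, LLL reduction, computer verification of small $n$) matches the paper's skeleton, but there is a genuine gap at the decisive step, and it is exactly the step you flag as the "main obstacle" without resolving it. The logarithms in your form $\Lambda$, namely $\log(\eta_1^{(i)}/\eta_1^{(j)})$ etc., have heights growing linearly in $n$ (like $\log F_n$). Feeding this into Matveev/Baker against the upper bound $\log|\Lambda|\lesssim -3n\log\alpha-3\log|y|$, and using $\log|y|\asymp nA$, gives $nA\lesssim Cn\log A$, i.e.\ an \emph{absolute} bound $A\le A_0\approx 10^{13}$ on the exponents --- but no bound on $n$ whatsoever. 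You are then left having to exclude an astronomically large set of pairs $(a_1,a_2)$ uniformly for all large $n$; the proposed "sharp congruences on $(a_1,a_2)$ modulo powers of $F_n$ in the Thomas--Heuberger tradition" is not worked out, and it is not available off the shelf here because Thomas--Heuberger treat \emph{polynomially} parametrised families, whereas $F_n,L_n$ grow exponentially in the parameter. As stated, your argument does not bound $n$ and hence does not prove the theorem.

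The paper closes this gap differently, exploiting Binet's formula: since $F_n,L_n$ and all the roots are (to high precision) Laurent series in $\alpha^{-n}$ with $\alpha=(1+\sqrt5)/2$, the linear form can be rewritten as $\xi=x_1\log\alpha+x_2\log\sqrt5+x_3\log(\sqrt5-1)$, where the three logarithms are \emph{fixed} (heights independent of $n$) and the $n$-dependence is pushed into the integer coefficients (e.g.\ $x_1=3nb_1$), at the cost of an extra error term $O(\alpha^{-2n})$ in the upper bound. Baker--W\"ustholz applied to $\xi$ then yields $\log\log|y|\gtrsim n$, i.e.\ $\log|y|$ exponentially large in $n$, which contradicts the Bugeaud--Gy\H{o}ry upper bound $\log|y|\ll n^4\log n$ (via a regulator bound $R\le 2n^2$ for the fundamental system $\alpha^{(i)},\alpha^{(i)}-F_n$, which the paper gets by citing Thomas rather than re-proving fundamentality as you propose). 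This produces the absolute bound $n\le 1.144\cdot10^{15}$, after which LLL and a convergent-checking trick finish. If you want to salvage your route, you must either incorporate this reformulation of the linear form or supply a genuinely different mechanism for bounding $n$; everything else in your outline is sound.
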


For a fixed integer $n$ we will call a solution $(x,y)\in \mathbb{Z}^2$ to \eqref{eq:thue} \textit{trivial}, if $|y|\leq 1$.

The remainder of this paper is structured as follows. First, we state preliminary results which we use during our proof, including standard results regarding linear forms in logarithms as per Baker-Wüstholz \cite{baker_lb} or Bugeaud-Győry \cite{bugy_up} in Section 2. We then study properties of Equation \eqref{eq:thue}. Assuming the existence of non-trivial solutions $(x,y)$, we construct linear forms in logarithms in Section 3. Using Baker's method, we get a lower bound on $\log\abs{y}$ that is contradictory to Bugeaud's and Győry's upper bound. This will lead to a huge effective bound for $n$, which we then reduce using the LLL-algorithm in Section 4.

\section{Preliminary Results and Notation}
We start with Binet's formula for the $n$-th Fibonacci and Lucas number,
\[
    F_n = \frac{\alpha^n-\beta^n}{\sqrt{5}}, \quad L_n = \alpha^n+\beta^n,
\]
where
\[
    \alpha = \frac{1+\sqrt{5}}{2}, \quad \beta = \frac{1-\sqrt{5}}{2}.
\]
Note that
\[
    \beta = -\alpha^{-1}.
\]
Furthermore, the inequality
\begin{equation}\label{eq:fib-bnd}
    \alpha^{n-2} \leq F_n \pm 11\alpha^{-n} \leq \alpha^{n-1} 
\end{equation}
holds for all $n \geq 6$ and we also use the relations
\begin{equation}\label{eq:luc-fib}
    L_n = F_{n-1}+F_{n+1} = F_n + 2F_{n-1},
\end{equation}
a number of times throughout the paper.

In order to associate the roots of the polynomial $F(X,1)$ with the respective Fibonacci and Lucas numbers, we will use the following result due to Kriegl, Losik and Michor \cite[Theorem $5.1$]{root_choose}. We state a reformulated version due to Heuberger \cite[Theorem $14$, Corollary $15$]{heub_param_gen}.
\begin{theorem}\label{thm:root-analytic}
    Let $U$ be a neighbourhood of $0$ in $\mathbb{R}$, $a_i(t)$ be real analytic functions on $U$ for $1\leq i \leq N$ and
    \begin{equation*}
        P(t)(x) = x^N - a_1(t)x^{N-1} + \cdots + (-1)^N a_N(t), \;\; t \in U
    \end{equation*}
    such that for all $t \in U$ all roots of $P(t)(x) = 0$ are real.
    Then there are real analytic functions $x_i(t)$, $i = 1, \dots, N$ on $U$ such that these functions are roots of $P(t)$.
\end{theorem}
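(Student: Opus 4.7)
The plan is to split the analysis into two regimes: the parameter values at which $P(t)$ has $N$ distinct roots, where the implicit function theorem produces analytic root functions, and the isolated parameters at which two or more roots coincide, where a local Newton-Puiseux expansion combined with the hypothesis that all roots of $P$ are real rules out genuine ramification.

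\textbf{Generic regime.} Let $\Delta(t)$ denote the discriminant of $P(t)$, a real analytic function on $U$. After first factoring out the $t$-analytic gcd of $P$ and $\partial_x P$ in $\mathbb{R}\{t\}[x]$ (the reality hypothesis passes to each factor, so one may induct on $N$), I may assume $\Delta \not\equiv 0$, whence its zero set $Z \subset U$ is discrete. On $U \setminus Z$ all $N$ roots of $P(t)$ are simple, and the holomorphic implicit function theorem, applied after extending $a_1,\ldots,a_N$ to a complex strip around $U$, produces $N$ locally real analytic root functions. Since distinct simple roots cannot collide on $U \setminus Z$, these extend to real analytic root functions on every connected component of $U \setminus Z$.

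\textbf{Collision points: the main obstacle.} Fix $t_0 \in Z$ and pass to a small complex disc around $t_0$. The Newton-Puiseux theorem partitions the roots of $P(t)$ into \emph{cycles}; a cycle of length $k$ consists of the $k$ conjugate Puiseux branches
\begin{equation*}
    \lambda + \sum_{j \geq 1} c_j\, \zeta^{j m}\,(t-t_0)^{j/k}, \qquad m = 0, 1, \ldots, k-1,
\end{equation*}
with $\zeta = e^{2\pi i / k}$. The crux is to show that reality forces $k = 1$ in every cycle. Supposing $k \geq 2$, let $j_0$ be the smallest index with $c_{j_0} \neq 0$; after replacing the cycle by its primitive sub-cycle one may assume $\gcd(j_0, k) = 1$. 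Near $t_0$ the $k$ branches then look like $\lambda + c_{j_0}\, \zeta^{j_0 m}(t-t_0)^{j_0/k}$ up to higher-order corrections, so they lie essentially on a circle of radius $|c_{j_0}|\,|t-t_0|^{j_0/k}$ around $\lambda$. For all $k$ of them to be real on both sides of $t_0$ this circle would have to meet the real axis in $k$ distinct points for both signs of $t-t_0$, which is impossible once $k \geq 2$ (the borderline case $k=2$ breaks via the purely imaginary behaviour of $(t-t_0)^{j_0/2}$ for odd $j_0$). Hence $c_{j_0} = 0$, a contradiction, and iterating on higher $j$ forces every $c_j$ with $k \nmid j$ to vanish. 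The Puiseux expansion thus collapses to an ordinary convergent power series in $t-t_0$, so each root is real analytic through $t_0$.

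\textbf{Globalisation.} The first two steps yield real analytic root germs at every point of $U$. A connectedness argument on $U$, with relabelling permitted at the isolated points of $Z$, then produces $N$ global real analytic functions $x_1, \ldots, x_N \colon U \to \mathbb{R}$ satisfying $P(t)(x_i(t)) = 0$ throughout $U$. The entire technical weight of the proof sits in the collision analysis of the second step; once the Newton-Puiseux cycle structure is in hand, the hypothesis that all roots are real does the rest.
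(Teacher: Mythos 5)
The paper does not prove this statement at all: it is imported verbatim from Kriegl--Losik--Michor via Heuberger (for one real analytic parameter it is classically Rellich's theorem), so there is no in-paper proof to compare against. Your argument is the standard one for that classical result and its overall architecture --- reduce to the separable case, use the implicit function theorem off the discriminant locus, kill nontrivial Puiseux cycles using hyperbolicity, and patch along the interval --- is sound.

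Two places need tightening. First, in the collision analysis the reduction ``one may assume $\gcd(j_0,k)=1$'' is not legitimate: primitivity of the cycle only guarantees $\gcd\bigl(k,\{j: c_j\neq 0\}\bigr)=1$, not that the \emph{smallest} such $j_0$ is coprime to $k$ (e.g.\ $k=6$ with exponents $2/6$ and $3/6$ present). When $k/\gcd(j_0,k)\le 2$ the leading terms $\zeta^{j_0m}$ give at most two distinct points, the circle picture yields nothing, and the ``iterating on higher $j$'' you invoke is exactly the part that is not written down. The clean fix is to drop the geometry: writing one branch as $f(\tau)=\lambda+\sum_j c_j\tau^j$ with $\tau=(t-t_0)^{1/k}$, reality of $f(\tau)$ for small real $\tau>0$ forces all $c_j\in\mathbb{R}$, and reality of the conjugate branches $f(\zeta^m\tau)$ forces $c_j\operatorname{Im}(\zeta^{jm})=0$ for all $j,m$, hence $k\mid 2j$ whenever $c_j\neq 0$; this collapses the cycle to period at most $2$, and the residual $k=2$ case dies on the side $t<t_0$ exactly as in your parenthetical, since $\tau$ is then purely imaginary and the odd-index coefficients must vanish. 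Second, the gcd extraction should be justified by Gauss's lemma over the discrete valuation ring $\mathbb{R}\{t\}$ so that the separable radical again has monic analytic coefficients and inherits hyperbolicity; this also makes clear that the statement is really a germ statement at $0$ (shrink $U$ to an interval), which suffices for the paper's application.
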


We will also use a very well known and simple result a number of times throughout the paper:
\begin{lemma}\label{lem:log-ub}
    Assume $x \in \mathbb{C}$ such that $\abs{x-1} \leq 1/2$, then $\log\abs{x} \leq 2 \abs{x-1}$.
\end{lemma}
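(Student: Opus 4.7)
The plan is to reduce the complex-variable claim to a real-variable estimate on $r := |x|$. Starting from the hypothesis $|x-1| \leq 1/2$, the reverse triangle inequality gives $|r - 1| \leq |x - 1| \leq 1/2$, so in particular $r \in [1/2, 3/2]$ and $\log r$ is well defined.

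From here I would split into two cases. If $r \leq 1$, then $\log r \leq 0 \leq 2|x-1|$ and there is nothing to prove. If $r > 1$, I would invoke the standard inequality $\log(1+t) \leq t$ for $t \geq 0$ (a consequence of the concavity of $\log$, whose tangent line at $0$ is $y = t$): applied with $t = r - 1 \geq 0$ this yields $\log r \leq r - 1 = |r-1| \leq |x - 1| \leq 2|x-1|$, which is the desired bound.

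There is essentially no obstacle here; the only mildly subtle ingredient is the reverse triangle inequality step that turns $|x-1| \leq 1/2$ into $||x| - 1| \leq 1/2$. I note that the case-split argument actually produces the sharper inequality $\log|x| \leq |x-1|$, so the factor $2$ in the statement is slack. A natural explanation for this slack is that the applications in Section~3 will really need the two-sided estimate $|\log|x|| \leq 2|x-1|$, which one obtains by the same reduction to $r$ combined with the standard power-series bound $|\log(1+y)| \leq \sum_{k \geq 1}|y|^k/k \leq |y|/(1-|y|) \leq 2|y|$ for $|y| \leq 1/2$.
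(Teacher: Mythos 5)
Your argument is correct, but it takes a genuinely different route from the paper. The paper's proof is the one-liner ``use the Taylor series expansion of $\log x$ at $1$,'' i.e.\ $\log x=\sum_{k\ge1}(-1)^{k+1}(x-1)^k/k$, whence $|\log x|\le\sum_{k\ge1}|x-1|^k/k\le |x-1|/(1-|x-1|)\le 2|x-1|$ and $\log|x|=\operatorname{Re}\log x\le|\log x|$. You instead reduce to the real variable $r=|x|$ via the reverse triangle inequality and use the tangent-line bound $\log(1+t)\le t$; this is more elementary (no complex logarithm, no series), and it even yields the sharper one-sided estimate $\log|x|\le|x-1|$. Your closing observation is the important one: the factor $2$ is not slack by accident, because Lemma~\ref{lem:log-trick} (which the paper says ``follows immediately'' from this lemma) actually asserts $\log\abs{f}=L(2g)$, i.e.\ the \emph{two-sided} bound $|\log|f||\le 2|g|$, and your one-sided case split does not deliver the lower bound $\log|x|\ge-2|x-1|$. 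To get that you must either run the power-series estimate you sketch at the end (which is exactly the paper's proof) or supplement your argument with a matching real-variable lower bound such as $\log r\ge 1-1/r\ge -2|r-1|$ for $r\ge1/2$. So as a proof of the lemma as literally stated you are done, and done more simply than the paper; as a proof of what the lemma is used for, you need the extra half that you correctly flagged.
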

\begin{proof}
Use the Taylor series expansion of $\log x$ at $1$.
\end{proof}

\begin{definition}
Let $f,g$ be two complex-valued functions. We write $f = L(g)$, if $\abs{f(x)} \leq \abs{g(x)}$ for all $x \in \mathbb{C}$.
\end{definition}

\begin{lemma}\label{lem:log-trick}
If $f(x) = 1+g(x)$ with $\abs{g(x)} \leq 1/2$, then $\log \abs{f} = L(2 g) $.
\end{lemma}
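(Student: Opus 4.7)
The plan is to invoke the complex Taylor series of $\log$ about~$1$, in much the same way as Lemma~\ref{lem:log-ub}, but to extract a \emph{two-sided} estimate on $\log|f|$ rather than only the upper bound. Unpacking the definition of the $L$-notation, what must be shown is $\bigl|\log|f(x)|\bigr|\le 2|g(x)|$ for every $x$, so a one-sided inequality of the type given by Lemma~\ref{lem:log-ub} alone is not quite enough.

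I would fix $x$, set $u=g(x)$ with $|u|\le 1/2$, and use the absolutely convergent expansion
$$\log(1+u)=\sum_{k=1}^{\infty}\frac{(-1)^{k+1}}{k}u^{k},$$
valid on the open unit disk. A termwise bound by a geometric series gives $|\log(1+u)|\le \sum_{k\ge 1}|u|^{k}/k \le |u|/(1-|u|)\le 2|u|$, where the final step uses $|u|\le 1/2$. Since $\log|1+u|=\operatorname{Re}\log(1+u)$, it follows that
$$\bigl|\log|f(x)|\bigr|\le |\log(1+u)|\le 2|u|=|2g(x)|,$$
which is precisely the claim $\log|f|=L(2g)$.

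No step is genuinely delicate. The only point worth flagging is that Lemma~\ref{lem:log-ub} on its own only bounds $\log|f|$ from above, so one cannot simply quote it; going through the complex series is needed to cover the regime where $|f(x)|<1$ and $\log|f(x)|$ is negative, and this is where the choice of the constant $2$ (rather than $1$) in the conclusion comes from, via the bound $1/(1-|u|)\le 2$.
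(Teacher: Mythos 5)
Your proof is correct and takes essentially the same route as the paper: the paper derives this lemma ``immediately'' from Lemma~1, whose own proof is precisely the Taylor expansion of $\log$ at $1$ that you carry out via the geometric-series bound $|u|/(1-|u|)\le 2|u|$. Your observation that Lemma~1 as literally stated is only a one-sided bound, so that the two-sided estimate required by the $L$-notation needs the full series argument, is a fair and worthwhile clarification of what the paper leaves implicit.
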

\begin{proof}
Follows immediately from Lemma $\ref{lem:log-ub}$.
\end{proof}

Before we state lower bounds on linear forms in logarithms, we first recall the definition of the logarithmic (Weil) height.

\begin{definition}
Let $\gamma$ be an algebraic number over $\mathbb{Q}$ with minimal polynomial
\[
    a_d X^d + \cdots + a_1 X + a_0 = a_d \prod_{i=1}^d \br{X-\gamma_i},
\]
with relatively prime integers $a_i$ and conjugates $\gamma_1 = \gamma, \gamma_2, \dots, \gamma_d$. The (absolute) logarithmic height of $\gamma$ is defined by
\[
    h(\gamma) := \frac{1}{d} \br{ \log\abs{a_d} + \sum_{i=1}^d \log\max\set{1,\abs{\gamma_i}} }.
\]
\end{definition}
\begin{theorem}[Baker, Wüstholz \cite{baker_lb}]\label{thm:baker-lb}
    Let $\gamma_1, \dots, \gamma_t$ be algebraic numbers not $0$ or $1$ in $K= \mathbb{Q}(\gamma_1,\dots,\gamma_t)$ of degree $D$, let $b_1, \dots, b_t \in \mathbb{Z}$ and let
    \[
        \Uplambda = b_1 \log\gamma_1 + \cdots + b_t \log\gamma_t
    \]
    be non-zero. Then
    \[
        \log\abs{\Uplambda} \geq - 18(t+1)!t^{t+1}(32D)^{t+2}\log(2tD) h_1 \cdots h_t \log B
    \]
    where 
    \[
        B \geq \max\set{\abs{b_1},\dots, \abs{b_t}}
    \]
    and
    \[
        h_i \geq \max\set{ h(\gamma_i), \abs{\log\gamma_i}D^{-1}, 0.16D^{-1} } \text{ for } 1 \leq i \leq t.
    \]
\end{theorem}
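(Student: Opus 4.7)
The plan is to follow Baker's method as refined by Wüstholz, using Philippon's zero estimate on commutative algebraic groups. Proceed by contradiction: assume $\log|\Uplambda|$ is strictly smaller than the claimed lower bound, then construct an auxiliary polynomial whose associated exponential function is, on one hand, arithmetically forced to vanish on a large set and, on the other, small enough analytically that a zero estimate yields a multiplicative subgroup relation contradicting $\Uplambda \neq 0$.

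First I would calibrate parameters $L_0, L_1, \ldots, L_t$ (multidegrees of an auxiliary polynomial), $T$ (a size parameter for its coefficients), and $S$ (an extrapolation length), chosen as explicit monomials in $D$, $t$, the $h_i$ and $\log B$ so that the inequalities appearing in the subsequent steps close up exactly at the target constant $18(t+1)!\,t^{t+1}(32D)^{t+2}\log(2tD)\,h_1\cdots h_t \log B$. Then, applying an arithmetic Siegel-type lemma over $K$, construct a non-zero polynomial $P \in \mathcal{O}_K[X_1, \ldots, X_t]$ of multidegree bounded by the $L_i$, with coefficients of logarithmic height at most $T$, such that the entire function
\[
    F(z) = P\br{\gamma_1^z, \ldots, \gamma_t^z}
\]
vanishes to order at least $L_0$ at every integer $s \in \set{1, \ldots, S}$. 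The smallness of $|\Uplambda|$ enters here: substituting $\log\gamma_t = -(b_1 \log\gamma_1 + \cdots + b_{t-1}\log\gamma_{t-1})/b_t + O(|\Uplambda|)$ inside $F$ lets one reduce one exponential to the others with controlled error, which is what makes the system of equations underdetermined enough for Siegel's lemma to apply.

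Next I would extrapolate: using Schwarz-type bounds on $F$ on discs of growing radius, together with the arithmetic fact that a non-zero algebraic integer whose conjugates are all small in absolute value cannot exist, promote the vanishing of $F$ from $\set{1, \ldots, S}$ to an exponentially larger set of points and higher multiplicities. Philippon's zero estimate on $\mathbb{G}_m^t$ then forces $P$ to vanish identically on a non-trivial algebraic subgroup of $\mathbb{G}_m^t$. Unravelling what this means in coordinates gives a multiplicative relation among the $\gamma_i^{b_i}$ which, combined with the hypothesis $\Uplambda \neq 0$, is the required contradiction.

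The hard part is the calibration step: the explicit constant $18(t+1)!\,t^{t+1}(32D)^{t+2}\log(2tD)$ arises from a tight optimisation of the trade-off between the size of the auxiliary polynomial (Siegel's lemma loss), the radius of extrapolation (the Schwarz-lemma factor), and the number of zeros one needs to invoke Philippon's estimate. Obtaining a constant that is merely polynomial rather than exponential in $t$, and independent of $B$ beyond the $\log B$ factor, is the main technical contribution of Baker–Wüstholz over Baker's original bounds, and the place where one must most carefully track every dependence.
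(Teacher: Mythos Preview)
The paper does not prove this statement at all: Theorem~\ref{thm:baker-lb} is quoted in the preliminaries section as a result of Baker and W\"ustholz \cite{baker_lb}, with no proof given, and is then simply applied as a black box in Lemma~\ref{lem:logy-lb}. So there is no ``paper's own proof'' to compare your proposal against.

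Your sketch is a reasonable high-level outline of the actual Baker--W\"ustholz argument (Siegel's lemma to build an auxiliary function, Schwarz-type extrapolation, Philippon's zero estimate on $\mathbb{G}_m^t$, and a careful calibration of parameters to reach the stated constant). But for the purposes of \emph{this} paper a proof is neither expected nor appropriate: the theorem is an external input, and reproducing even a sketch of its proof would be out of scope. If you were asked to supply something here, the correct response is simply to cite \cite{baker_lb} and move on.
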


Finally, we also state the following result due to Bugeaud and Győry \cite{bugy_up}.
\begin{theorem}[Bugeaud, Gy\H{o}ry \cite{bugy_up}]\label{thm:bugy-ub}
    Let $B \geq \max\set{\abs{m}, e}$, $\alpha$ be a root of $F(X,1)$, $K:=\mathbb{Q}(\alpha)$, $R:=R_K$ the regulator of $K$ and $r$ the unit rank of $K$. Let $H\geq 3$ be an upper bound for the absolute values of the coefficients of $F$ and $N$ its degree.
    
    Then all solutions $(x,y) \in \mathbb{Z}^2$ of the Thue equation $F(X,Y) = m$ satisfy
    \[
        \max\set{\log\abs{x},\log\abs{y}} \leq 3^{r+27}(r+1)^{7r+19}N^{2N+6r+14} \cdot R \cdot \max\set{\log R, 1} \cdot \br{ R + \log(HB) }.
    \]
\end{theorem}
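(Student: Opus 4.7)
The statement is the classical effective upper bound of Bugeaud and Győry for solutions of a Thue equation over a number field $K$ of degree $N$. The proof follows the standard scheme that combines Siegel's identity with a Baker-type lower bound for linear forms in logarithms; the work lies in tracking constants carefully enough to reach the explicit shape announced in the statement.

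First, factor $F(X,Y) = a_N\prod_{i=1}^N (X - \alpha_i Y)$, where $\alpha_1 = \alpha, \alpha_2, \ldots, \alpha_N$ are the conjugates. For a solution $(x,y)$ of $F(X,Y)=m$, the numbers $\beta_i := a_N(x-\alpha_i y)$ are algebraic integers whose product equals $a_N^{N-1} m$, so each ideal $(\beta_i)$ is supported on the finite set $S$ of primes dividing $a_N m$. Using class-group representatives, I would write $\beta_i = \eta_i\, \varepsilon_1^{b_{i,1}} \cdots \varepsilon_r^{b_{i,r}}$, where $\varepsilon_1,\ldots,\varepsilon_r$ is a fundamental system of units of $K$ whose logarithmic embeddings span a lattice of covolume $R$, and $\eta_i$ is an element whose height is controlled in terms of $H$, $|m|$ and the class number of $K$.

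Next, choose indices $i,j,k$ so that $|x - \alpha_i y|$ is minimal, and apply Siegel's identity
\[
    (\alpha_j - \alpha_k)\beta_i + (\alpha_k - \alpha_i)\beta_j + (\alpha_i - \alpha_j)\beta_k = 0.
\]
Rearranging shows that
\[
    \Uplambda := \frac{(\alpha_i - \alpha_k)\,\beta_j}{(\alpha_j - \alpha_k)\,\beta_i} - 1
\]
has absolute value of order $|x - \alpha_i y|/\max\{|x|,|y|\}$, hence decays exponentially in $\max\{\log|x|,\log|y|\}$. Substituting the unit decomposition of $\beta_j/\beta_i$ rewrites $\log|\Uplambda+1|$ as a $\mathbb{Z}$-linear combination of logarithms of the $\varepsilon_\ell$, the algebraic factors $(\alpha_i-\alpha_k)/(\alpha_j-\alpha_k)$, and $\eta_j/\eta_i$, whose integer exponents $b_{j,\ell} - b_{i,\ell}$ are bounded by a constant multiple of $\max\{\log|x|,\log|y|\}/R$ via the classical lemma relating the coordinates of a unit in the fundamental basis to its logarithmic embedding.

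Finally, applying Theorem \ref{thm:baker-lb} to this linear form produces a lower bound on $|\Uplambda|$ which, confronted with the exponential upper bound from Siegel's identity, yields an inequality that can be solved for $\max\{\log|x|,\log|y|\}$ and gives the announced shape. The main obstacle is obtaining the explicit constant $3^{r+27}(r+1)^{7r+19}N^{2N+6r+14}$: this demands the sharpest available version of the linear forms estimate in which the field degree $D \leq N$ and the unit rank $r$ appear explicitly, combined with effective bounds for the heights of the fundamental units $\varepsilon_\ell$ and of the class-group representatives $\eta_i$, which together account for the factor $R \cdot \max\{\log R, 1\}\cdot (R + \log(HB))$. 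A secondary technical point is the treatment of fields with fewer than three real embeddings, where the triple $i,j,k$ must include complex embeddings and one checks that complex absolute values still deliver the needed smallness estimate; this requires a minor case analysis but does not alter the final bound.
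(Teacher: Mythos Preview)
The paper does not prove this theorem at all: it is quoted verbatim from Bugeaud and Gy\H{o}ry \cite{bugy_up} as a black-box input, with no argument given. There is therefore nothing in the paper to compare your proposal against.

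That said, your sketch is a faithful outline of the method Bugeaud and Gy\H{o}ry actually use in \cite{bugy_up}: factor the form, write $\beta_i$ via fundamental units and controlled representatives, feed Siegel's identity into a Baker-type lower bound, and compare with the evident upper bound. The places where genuine work remains are exactly the ones you flag --- bounding the heights of a fundamental system of units in terms of $R$ (a lemma of Bugeaud--Gy\H{o}ry building on earlier work of Siegel and Hajdu), handling the class-group representatives effectively, and tracking the explicit constants through the linear-forms estimate --- and these are not trivial to carry out to the stated precision. For the purposes of the present paper, however, none of this is needed: Theorem~\ref{thm:bugy-ub} is simply cited and applied in Lemma~\ref{lem:logy-ub}.
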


\section{Auxiliary Results}
We start by taking a look at the polynomial of Equation \eqref{eq:thue} at $Y=1$,
\begin{equation}\label{eq:defining-polynomial}
    f_n(X) := (X-F_n)(X-L_n)X - 1
\end{equation}

and its three roots $\alpha^{(1)}, \alpha^{(2)}, \alpha^{(3)}$.

\begin{lemma}\label{lem:root-aprox}
    Let $\alpha = (1+\sqrt{5})/2$, then the roots of $f_n$ are given by
    \begin{align*}
        \alpha^{(1)}
        &=\frac{1}{\sqrt{5}} \alpha^n - (-1)^n \frac{1}{\sqrt{5}} \alpha^{-n} + \frac{5}{1-\sqrt{5}} \alpha^{-2n} + L(12 \alpha^{-3n}) \\
         &=  F_n + L(6 \alpha^{-2n}), \\
        \alpha^{(2)} & = \alpha^n + (-1)^n \alpha^{-n} + L(4 \alpha^{-2n})= L_n + L(4 \alpha^{-2n}), \\
        \alpha^{(3)} &= \sqrt{5} \alpha^{-2n} + L(\alpha^{-4n}).
    \end{align*}
\end{lemma}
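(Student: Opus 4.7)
The polynomial $f_n(X) = X^3 - (F_n+L_n)X^2 + F_nL_n X - 1$ differs from $X(X-F_n)(X-L_n)$ only by the additive constant $-1$, so its three real roots should lie close to $0$, $F_n$ and $L_n$. My first step is to confirm that, for $n$ large enough, three distinct real roots exist, one in a small neighborhood of each of these points. A direct intermediate-value argument suffices: since $f_n(0) = f_n(F_n) = f_n(L_n) = -1$, one only needs to evaluate $f_n$ at suitably shifted points (such as $F_n - c\alpha^{-2n}$) to exhibit the requisite sign changes. Alternatively one may invoke Theorem~\ref{thm:root-analytic} after scaling out the dominant Binet terms in the small parameter $\alpha^{-n}$. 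This fixes the labels $\alpha^{(1)}, \alpha^{(2)}, \alpha^{(3)}$ as the roots near $F_n, L_n, 0$ respectively.

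To compute $\alpha^{(3)}$ I would use the fixed-point reformulation $X = 1/((X-F_n)(X-L_n))$. One iteration from $X \approx 0$ yields $\alpha^{(3)} \approx 1/(F_nL_n) = 1/F_{2n}$, and the geometric expansion $1/F_{2n} = \sqrt 5\alpha^{-2n}/(1-\alpha^{-4n}) = \sqrt 5\alpha^{-2n} + O(\alpha^{-6n})$, together with a bound on the next iteration contribution (of order $(F_n+L_n)\alpha^{(3)}/F_{2n}^2 = O(\alpha^{-5n})$), gives $\alpha^{(3)} = \sqrt 5\alpha^{-2n} + L(\alpha^{-4n})$ once the implicit constant $\approx 10\alpha^{1-n}$ is at most $1$, i.e.\ for $n\geq 6$.

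For $\alpha^{(1)}$ and $\alpha^{(2)}$ I would substitute $X = F_n + u$ and $X = L_n + v$ and use $L_n - F_n = 2F_{n-1}$ from~\eqref{eq:luc-fib} to get the fixed-point equations $u = -1/((2F_{n-1}-u)(F_n+u))$ and $v = 1/((2F_{n-1}+v)(L_n+v))$, with leading terms $u_0 = -1/(2F_{n-1}F_n)$ and $v_0 = 1/(2F_{n-1}L_n)$. A direct Binet calculation, using $\alpha - 1/\alpha = 1$, simplifies $5F_{n-1}F_n = \alpha^{2n-1} - \alpha^{-(2n-1)} + (-1)^n$; inverting geometrically extracts the refinement of $F_n$ stated in the lemma, with $\frac{5}{1-\sqrt 5}\alpha^{-2n}$ as the $O(\alpha^{-2n})$ correction, and all further residuals (higher geometric terms plus the contribution of $u$ inside the denominator) are of order $\alpha^{-4n}$ or better, hence absorbed into $L(12\alpha^{-3n})$. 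The analogous computation for $v$ (using $\alpha + 1/\alpha = \sqrt 5$) gives $|v_0|\approx 1.81\alpha^{-2n}$ and thus $\alpha^{(2)} = L_n + L(4\alpha^{-2n})$. The compressed form $\alpha^{(1)} = F_n + L(6\alpha^{-2n})$ then follows from $|5/(1-\sqrt 5)|<4.05$ and $12\alpha^{-n}<1.95$ for $n\geq 4$.

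The main obstacle is not the structure of the argument but the careful tracking of the numerical constants $1, 4, 6, 12$: at each step the residual must be estimated explicitly and verified for the concrete range of $n$ used in the rest of the paper, which requires Binet-based bounds like~\eqref{eq:fib-bnd} rather than purely asymptotic order-of-magnitude statements.
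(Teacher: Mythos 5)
Your argument is correct, and it reaches the stated expansions by a genuinely different route than the paper. The paper first invokes Theorem~\ref{thm:root-analytic} to justify writing each root as a formal Laurent series $\sum_k d_k^{(i)}\alpha^{-kn}$, determines the coefficients $d_{-1}^{(i)}, d_0^{(i)}, d_1^{(i)}, d_2^{(i)}$ by comparing coefficients in $(\alpha^{(i)}-F_n)(\alpha^{(i)}-L_n)\alpha^{(i)}=1$, and only then pins down the explicit $L$-constants by evaluating $f_n$ at points such as $F_n-6\alpha^{-2n}$ and applying the intermediate value theorem. You instead localise the roots by the intermediate value theorem at the outset (exploiting $f_n(0)=f_n(F_n)=f_n(L_n)=-1$), and then extract the expansions from the exact fixed-point identities $u=-1/\bigl((2F_{n-1}-u)(F_n+u)\bigr)$, $v=1/\bigl((2F_{n-1}+v)(L_n+v)\bigr)$, $X=1/\bigl((X-F_n)(X-L_n)\bigr)$, using closed-form Binet identities such as $5F_{n-1}F_n=\alpha^{2n-1}-\alpha^{-(2n-1)}+(-1)^n$ and $F_nL_n=F_{2n}$; your verification that $-\tfrac{5}{2}\alpha=\tfrac{5}{1-\sqrt5}$ recovers the paper's $d_2^{(1)}$ exactly. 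Your approach is more elementary and self-contained: it sidesteps the analytic-roots theorem entirely (which the paper needs only to legitimise the series ansatz, and which is slightly delicate here because of the $(-1)^n$ parity dependence), and it produces the error terms together with the main terms rather than in a separate IVT step. What it costs is exactly what you identify: every residual (higher geometric terms, the back-substitution of $u$, $v$, $X$ into the denominators) must be bounded with explicit constants and an explicit range of $n$ (your $n\ge 6$ is consistent with the paper's standing assumption $n\ge 10$), whereas the paper's coefficient comparison mechanically produces as many exact Laurent coefficients as desired.
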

\begin{proof}
The form of $\alpha^{(i)}$ follows from Theorem \ref{thm:root-analytic}. As real analytic functions, they can each be expressed by a Laurent series
\[
    \alpha^{(i)} = \sum_{k = -N}^\infty d_k^{(i)} \alpha^{-kn}.
\]
Comparing the coefficients on both sides of the equation
\begin{equation}\label{eq:root-coef}
    (\alpha^{(i)}-F_n)(\alpha^{(i)}-L_n)\alpha^{(i)} = 1,
\end{equation}
which follows from $f_n\br{\alpha^{(i)}} = 0$, yields the coefficients $d_k^{(i)}$ to arbitrary depth.

Assume for a moment that $N > 1$. We only consider the highest order terms of the left hand side of Equation \eqref{eq:root-coef},
\[
    (d_{-N}^{(i)}\alpha^{Nn}+\cdots)(d_{-N}^{(i)}\alpha^{Nn}+\cdots)(d_{-N}^{(i)}\alpha^{Nn}+\cdots) = \br{ d_{-N}^{(i)} }^3 \alpha^{3Nn} + \cdots.
\]
Since the right hand side of Equation \eqref{eq:root-coef} only has a non-zero coefficient $1$ of $\alpha^0$, it immediately follows that $d_{-N}^{(i)} = 0$.

Thus we may assume $N = 1$. Equation \eqref{eq:root-coef} then becomes
\[
    \left(\br{d_{-1}^{(i)}-\frac{1}{\sqrt{5}}}\alpha^n+\cdots\right)\left(\br{d_{-1}^{(i)}-1}\alpha^n+\cdots\right)\br{ d_{-1}^{(i)} \alpha^n + \cdots } = 1
\]
and thus
\[
    \br{d_{-1}^{(i)}-\frac{1}{\sqrt{5}}}\br{d_{-1}^{(i)}-1}d_{-1}^{(i)} = 0,
\]
from which follows that the coefficient of $\alpha^n$ in $\alpha^{(i)}$ is $\frac{1}{\sqrt{5}}, 1$ or $0$. Similarly, we get the coefficient of $\alpha^0$ being $0$ and of $\alpha^{-n}$ being $(-1)^{n+1}\frac{1}{\sqrt{5}}, (-1)^n$ or $0$ respectively.

We calculate $d_2 = d_2^{(1)}$ for $\alpha^{(1)}$, already knowing that the first and second coefficients are the same as for $F_n$. Therefore we look at Equation \eqref{eq:root-coef} which is now
\[
    \br{ d_2 \alpha^{-2n} + \cdots }\left( \br{\frac{1}{\sqrt{5}}-1} \alpha^n + \cdots \right)\br{ \frac{1}{\sqrt{5}} \alpha^n + \cdots } = 1.
\]
This yields
\[
    d_2 \br{\frac{1}{\sqrt{5}}-1} \frac{1}{\sqrt{5}} = 1
\]
and thus
\[
    d_2 = \frac{5}{1-\sqrt{5}}.
\]
We have now calculated sufficiently many coefficients and hide the rest of the Laurent series inside the $L$-notation.
Note, for example, that $f_n(F_n) = -1 < 0$. If we either have $f_n(F_n + \kappa \alpha^{-2n}) > 0$ or $f_n(F_n - \kappa \alpha^{-2n}) > 0$, then the root $\alpha^{(1)}$ must lie in the interval $(F_n \pm \kappa \alpha^{-2n})$ due to the intermediate value theorem, from which $\alpha^{(1)} = F_n + L(\kappa \alpha^{-2n})$ follows. Indeed, we have  $f_n(F_n - 6 \alpha^{-2n}) > 0$ and thus $\alpha^{(1)} = F_n + L\br{6\alpha^{-2n}}$. The other assertions regarding the $L$-notation hold analogously. 
\end{proof}

As an immediate consequence, we also get the following lemma.
\begin{lemma}\label{lem:root-log-aprox}
    We have
    \begin{align*}
        \log\abs{\alpha^{(1)}} &= n \log\alpha - \log\sqrt{5} 
        +L\br{3\alpha^{-2n}}, \\
        \log\abs{\alpha^{(1)}-F_n} &= -2n \log\alpha + \log\br{\frac{5}{\sqrt{5}-1}} + L\br{ 6 \alpha^{-n} }, \\
        \log\abs{\alpha^{(2)}} &= n \log\alpha + L\br{ 4\alpha^{-2n} }, \\
        \log\abs{ \alpha^{(2)} - F_n } &= n\log\alpha + \log\br{ 1-\frac{1}{\sqrt{5}} } + L\br{ 6 \alpha^{-2n} }, \\
        \log\abs{\alpha^{(3)}} &= -2n \log\alpha + \log\sqrt{5} +
        L\br{\alpha^{-2n}}, \\
        \log\abs{ \alpha^{(3)} - F_n } &= n\log\alpha - \log\sqrt{5} + L\br{3\alpha^{-2n}}.
    \end{align*}
\end{lemma}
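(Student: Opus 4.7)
The plan is to derive each of the six identities by factoring the dominant term out of the expression for $\alpha^{(i)}$ or $\alpha^{(i)}-F_n$ given by Lemma~\ref{lem:root-aprox} (or deducible from it together with Binet's formula), and then applying Lemma~\ref{lem:log-trick} to the remaining factor which, for $n$ sufficiently large, will be of the form $1+g$ with $|g|\leq 1/2$.

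More concretely, for $\log|\alpha^{(1)}|$ I would use the explicit Laurent expansion
\[
    \alpha^{(1)} = \tfrac{1}{\sqrt 5}\alpha^n\Bigl(1 - (-1)^n\alpha^{-2n} + \tfrac{\sqrt 5\cdot 5}{1-\sqrt 5}\alpha^{-3n} + L(12\sqrt 5\,\alpha^{-4n})\Bigr),
\]
so that Lemma~\ref{lem:log-trick} yields $\log|\alpha^{(1)}| = n\log\alpha-\log\sqrt 5 + L(3\alpha^{-2n})$ once $n$ is large enough for the bracketed factor to be within $1/2$ of $1$ and for the prefactor $2$ from the log-trick to absorb the lower order terms into the constant $3$. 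The identity $F_n=\tfrac{1}{\sqrt 5}\alpha^n-\tfrac{(-1)^n}{\sqrt 5}\alpha^{-n}$ shows that the first two terms of $\alpha^{(1)}$ cancel exactly against $F_n$, leaving
\[
    \alpha^{(1)}-F_n = \tfrac{5}{1-\sqrt 5}\alpha^{-2n}\bigl(1+L(\tfrac{12(\sqrt 5-1)}{5}\alpha^{-n})\bigr),
\]
and taking logarithms (noting $|5/(1-\sqrt 5)| = 5/(\sqrt 5 -1)$) gives the second claim. For $\alpha^{(2)}$ I would factor out $\alpha^n$, and for $\alpha^{(2)}-F_n = (1-\tfrac{1}{\sqrt 5})\alpha^n+(-1)^n(1+\tfrac{1}{\sqrt 5})\alpha^{-n}+L(4\alpha^{-2n})$ I would factor out $(1-\tfrac{1}{\sqrt 5})\alpha^n$. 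For $\alpha^{(3)}$ I would factor out $\sqrt 5\,\alpha^{-2n}$, and for $\alpha^{(3)}-F_n$ the dominant term is $-\tfrac{1}{\sqrt 5}\alpha^n$ (coming from $-F_n$), which I would factor out, leaving a factor of the shape $1-(-1)^n\alpha^{-2n}-5\alpha^{-3n}+\dots$. In each case Lemma~\ref{lem:log-trick} converts the logarithm of the perturbation factor into an $L(\cdot)$-expression of the claimed order.

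The main obstacle is purely bookkeeping: one has to verify, for every one of the six expressions, that the explicit numerical constants appearing after multiplying by the $2$ from Lemma~\ref{lem:log-trick} and adding in the subdominant terms of the Laurent series still fit under the stated bounds ($3$, $6$, $4$, $6$, $1$, $3$ respectively). This requires a mild lower bound on $n$ (coming from the need $|g|\leq 1/2$), which can be checked independently for the finitely many small $n$ falling outside this range using the later numerical / reduction arguments, so no sharp inequality is needed here. Beyond these verifications, the proof is a direct and mechanical application of Lemma~\ref{lem:log-trick} to the expansions supplied by Lemma~\ref{lem:root-aprox}.
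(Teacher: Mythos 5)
Your proposal is correct and follows essentially the same route as the paper: factor the dominant term out of the expansions from Lemma~\ref{lem:root-aprox} (using Binet's formula where needed) and apply Lemma~\ref{lem:log-trick}, with the stated constants absorbing the factor $2$ and the subdominant terms for $n$ in the range under consideration. The paper merely carries out one of the six cases explicitly and declares the rest analogous, so your case-by-case bookkeeping is, if anything, more complete.
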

\begin{proof}
    Follows from Lemma \ref{lem:root-aprox} and Lemma \ref{lem:log-trick}. For example, we have
    \begin{align*}
        \log\abs{\alpha^{(1)} } &= \log\br{ F_n + L\br{6\alpha^{-2n}} } = \log\br{ \frac{1}{\sqrt{5}}\alpha^n \br{ 1 \pm \alpha^{-2n} + L\br{ 6\sqrt{5} \alpha^{-3n} } } } \\
        &= n\log\alpha - \log\sqrt{5} + \log\br{ 1 + L\br{ \frac{3}{2} \alpha^{-2n} } }
    \end{align*}
    and thus the claimed form for $\log\abs{\alpha^{(1)}}$ after using Lemma \ref{lem:log-trick}.
\end{proof}

\begin{lemma}
    The only solutions $(x,y) \in \mathbb{Z}^2$ to the Thue Equation \eqref{eq:thue} with $\abs{y} < 2$ are $\pm\set{ (1,0), (0,1), (F_n,1), (L_n,1) }$.
\end{lemma}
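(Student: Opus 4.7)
The plan is a direct finite case analysis on $y \in \{-1, 0, 1\}$. First, I would observe that the map $(x, y) \mapsto (-x, -y)$ sends any solution of $(X - F_n Y)(X - L_n Y) X - Y^3 = \epsilon$ to a solution with right-hand side $-\epsilon$; since both signs are permitted in \eqref{eq:thue}, this symmetry lets me reduce to $y \in \{0, 1\}$.

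For $y = 0$, the equation collapses immediately to $x^3 = \pm 1$, giving $(x, y) = \pm(1, 0)$.

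For $y = 1$, the equation reads $(x - F_n)(x - L_n)\, x = 1 \pm 1$, which is either $0$ or $2$. If the product equals $0$, one of the three integer factors must vanish, giving $x \in \{0, F_n, L_n\}$ and hence the remaining trivial solution families. If the product equals $2$, then $x$ must divide $2$, restricting to $x \in \{\pm 1, \pm 2\}$; a sign comparison using $F_n, L_n \geq 0$ eliminates the two negative choices, while $x = 1$ and $x = 2$ reduce to the finite Diophantine conditions $(F_n - 1)(L_n - 1) = 2$ and $(F_n - 2)(L_n - 2) = 1$ respectively. Each of these is settled by enumerating the finitely many integer factorisations of the right-hand side and comparing against the monotone unbounded growth of the Fibonacci and Lucas numbers, which leaves at most a handful of small $n$ to inspect by hand.

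The proof has no real obstacle: every step is elementary arithmetic. The only point that requires care is the initial symmetry argument, without which the $y = -1$ branch would have to be treated as a separate mirror calculation.
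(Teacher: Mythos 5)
Your argument is structurally the same as the paper's: a finite case split on $y\in\{-1,0,1\}$, with $y=0$ giving $x=\pm1$, and $y=\pm1$ splitting into the product being $0$ (yielding the trivial families) or $\pm2$ (a finite divisibility analysis). The $(x,y)\mapsto(-x,-y)$ symmetry is a clean way to halve the work and is valid, since the left-hand side of \eqref{eq:thue} is odd under this map and both signs of the right-hand side are allowed.

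The one place you must not wave your hands is the deferred ``handful of small $n$ to inspect by hand'': that inspection is \emph{not} vacuous. For $x=2$ your condition $(F_n-2)(L_n-2)=1$ is satisfied by $F_n=L_n=1$, i.e.\ $n=1$, and indeed $(2-F_1)(2-L_1)\cdot 2-1^3=1$, so $(2,1)$ is a solution of \eqref{eq:thue} with $\abs{y}=1$ that is not in $\pm\set{(1,0),(0,1),(F_1,1),(L_1,1)}$. This is exactly the extra solution $(2,1)$ listed in Theorem \ref{thm:main} for $n=1$, so the lemma as literally stated requires an exception at $n=1$; the paper's own proof glosses over this with ``$n\leq 3$ but gives no new solutions,'' which is false for $n=1$. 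Your route, if you actually carry out the enumeration rather than deferring it, is the more careful one and would catch this. So: complete the final check explicitly, and either record the $n=1$ exception or note that the lemma is only invoked later under the standing assumption $n\geq 10$, where the issue disappears.
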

\begin{proof}
    If $y = 0$, then Equation \eqref{eq:thue} becomes $X^3 = \pm 1$ and thus $x = \pm 1$. If $y = \pm 1$, then either
    \[
        (X\pm F_n)(X\pm L_n)X = 0
    \]
    or
    \[
        (X\pm F_n)(X\pm L_n)X = \pm 2.
    \]
    The first case immediately gives the solutions $\pm \set{ (F_n,1), (L_n,1), (0,1) }$. The second case gives no further solutions, unless $L_n - F_n \leq 2$, which implies $n \leq 3$ but gives no new solutions.
\end{proof}

Throughout the remainder of this paper, let us assume that $(x,y)$ is a non-trivial solution to \eqref{eq:thue}, i.e. $\abs{y} \geq 2$. It is also advantageous to assume $n \geq 10$. This can be done without loss of generality, since we will solve the Thue Equation \eqref{eq:thue} for such small $n$ later.

For each of the $\alpha^{(i)}$ we define
\[
    \beta_i := x - \alpha^{(i)} y.
\]

Since $\beta_1\beta_2\beta_3 = \pm1$, they are all units. Furthermore, we have the following property for one of the $\beta_i$:

Let $j$ be defined by the $\beta_i$ with minimal absolute value, that is
\[
    \abs{\beta_j} = \min_{ i \in \set{1,2,3} } \abs{\beta_i},
\]
then we have the following result.

\begin{lemma}\label{lem:beta_j-ub}
    We have 
    \[
        \abs{\beta_j} \leq 2\alpha^5\abs{y}^{-2} \cdot \alpha^{-2n}.
    \]
\end{lemma}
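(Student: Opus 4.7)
The plan hinges on the norm relation $\beta_1\beta_2\beta_3 = \pm 1$, which follows from $(x,y)$ being a solution of \eqref{eq:thue} since $\prod_{i=1}^{3}(x - \alpha^{(i)}y) = y^{3} f_n(x/y) = \pm 1$. Setting $\{a,b\} = \{1,2,3\}\setminus\{j\}$, this yields $|\beta_j| = (|\beta_a|\,|\beta_b|)^{-1}$, so the task of upper-bounding $|\beta_j|$ reduces to lower-bounding $|\beta_a|$ and $|\beta_b|$.

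To obtain such lower bounds I would exploit that all three roots $\alpha^{(i)}$ are real, as Lemma~\ref{lem:root-aprox} presents them as real analytic expressions. Hence $\alpha^{(j)}$ is literally the root closest to $x/y\in\mathbb{R}$, and the triangle inequality gives, for $i\neq j$,
\[
    |\alpha^{(j)} - \alpha^{(i)}| \leq |x/y - \alpha^{(j)}| + |x/y - \alpha^{(i)}| \leq 2\,|x/y - \alpha^{(i)}|,
\]
so that $|\beta_i| = |y|\cdot|x/y - \alpha^{(i)}|\geq \tfrac{|y|}{2}|\alpha^{(j)} - \alpha^{(i)}|$. Substituting into the norm relation yields
\[
    |\beta_j| \leq \frac{4}{|y|^{2}\,|\alpha^{(j)} - \alpha^{(a)}|\cdot|\alpha^{(j)} - \alpha^{(b)}|}.
\]

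What remains is to lower-bound the product of root differences in each of the three cases $j\in\{1,2,3\}$. Subtracting the expansions of Lemma~\ref{lem:root-aprox} gives $\alpha^{(1)} - \alpha^{(2)} = -2F_{n-1} + L(10\alpha^{-2n})$, while $\alpha^{(1)} - \alpha^{(3)}$ and $\alpha^{(2)} - \alpha^{(3)}$ are respectively $F_n$ and $L_n$ up to errors of order $\alpha^{-2n}$. Combining these with \eqref{eq:fib-bnd} and the asymptotics $F_n\sim\alpha^n/\sqrt{5}$, $L_n\sim\alpha^n$, each such product is at least $c_j\,\alpha^{2n}$ for an explicit constant $c_j$. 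The tightest case is $j=1$, where the product involves the smaller factor $2F_{n-1}\cdot F_n\sim (2/5)\alpha^{2n-1}$, giving $c_1\approx 2/(5\alpha)$ and hence $4/c_1\approx 10\alpha < 2\alpha^5$. The cases $j=2$ and $j=3$ replace $2F_{n-1}$ by $L_n$, yielding strictly larger $c_j$ and thus smaller $4/c_j$, so the uniform bound $2\alpha^5$ absorbs all three.

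The only (minor) obstacle is keeping the $L$-error terms from Lemma~\ref{lem:root-aprox} tight enough so that the constant really stays below $2\alpha^5$ in the worst case $j=1$; the assumption $n\geq 10$ leaves ample slack and makes the verification routine.
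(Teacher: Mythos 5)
Your proposal is correct and follows essentially the same route as the paper: the norm relation $\beta_1\beta_2\beta_3=\pm1$, the triangle-inequality lower bound $\abs{\beta_i}\geq\tfrac{\abs{y}}{2}\abs{\alpha^{(j)}-\alpha^{(i)}}$ for $i\neq j$, and a case-by-case lower bound on the product of root differences, whose minimum over the three cases is $\approx 2F_{n-1}F_n$. The paper simply finishes with the cruder estimate $2F_{n-1}F_n\geq 2\alpha^{n-3}\alpha^{n-2}=2\alpha^{-5}\alpha^{2n}$ from \eqref{eq:fib-bnd}, which yields exactly the stated constant $2\alpha^5$, while your sharper asymptotics give a slightly better constant that is then absorbed.
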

\begin{proof}
    Let $i\neq j$, then
    \[
        \abs{y} \abs{\alpha^{(i)} - \alpha^{(j)}} \leq \abs{x-\alpha^{(i)}y} + \abs{x-\alpha^{(j)}y} \leq 2 \abs{x - \alpha^{(i)}y},  
    \]
    and thus
    \begin{equation}\label{eq:beta_i-lb1}
       \frac{\abs{y}}{2} \abs{\alpha^{(i)} - \alpha^{(j)}} \leq \abs{x - \alpha^{(i)}y}.
    \end{equation}
    
    For $\set{j,k,l} = \set{1,2,3}$, we have $\beta_j\beta_k\beta_l = 1$ and therefore
    \[
        \abs{\beta_j} = \abs{x-\alpha^{(k)}y}^{-1} \abs{x - \alpha^{(l)}y}^{-1}.
    \]
    Applying Inequality \eqref{eq:beta_i-lb1} on both factors on the right yields
    \begin{equation}\label{eq:beta_j-ub1}
        \abs{\beta_j} \leq 4\abs{y}^{-2} \abs{\alpha^{(k)}-\alpha^{(j)}}^{-1}\abs{\alpha^{(l)}-\alpha^{(j)}}^{-1}.
    \end{equation}
    Using case differentiation for the values of $j,k,l$ gives us
    \begin{align*}
        \abs{\alpha^{(k)} - \alpha^{(j)}}\abs{\alpha^{(l)} - \alpha^{(j)}} &\geq (L_n-F_n + L(10\alpha^{-2n}) )(F_n + L(9\alpha^{-2n})) \\
        &= 2 F_n F_{n-1} + L(11\alpha^{-n}).
    \end{align*}
    by Equation \eqref{eq:luc-fib}. We then use Inequality \eqref{eq:fib-bnd} to obtain the bound
    \[
        \abs{\alpha^{(k)} - \alpha^{(j)}}\abs{\alpha^{(l)} - \alpha^{(j)}} \geq 2 \alpha^{n-2}\alpha^{n-3} = 2\alpha^{-5} \alpha^{2n}.
    \]
    Combining this with Inequality \eqref{eq:beta_j-ub1} yields
    \[
        \abs{\beta_j} \leq 2\alpha^5\abs{y}^{-2} \cdot \alpha^{-2n}.
    \]
\end{proof}

From now on let $\set{j,k,l} = \set{1,2,3}$. We take a look at Siegel's identity, which in our case yields
\[
    \beta_j \br{ \alpha^{(k)} - \alpha^{(l)} } + \beta_l \br{ \alpha^{(j)} - \alpha^{(k)} } + \beta_k \br{ \alpha^{(l)} - \alpha^{(j)} } = 0.
\]
By rearranging terms, we arrive at
\begin{equation}\label{eq:siegl}
    \frac{\beta_l}{\beta_k} \frac{ \alpha^{(j)} - \alpha^{(k)} }{ \alpha^{(j)} - \alpha^{(l)} } - 1 = \frac{\beta_j}{\beta_k} \frac{ \alpha^{(l)} - \alpha^{(k)} }{ \alpha^{(j)} - \alpha^{(l)} } =: \gamma.
\end{equation}

\begin{lemma}\label{lem:gamma-ub}
    We have
    \[
        \abs{\gamma} \leq 8\alpha^8\abs{y}^{-3} \cdot \alpha^{-3n}.
    \]
\end{lemma}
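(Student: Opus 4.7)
The plan is to combine the upper bound on $\abs{\beta_j}$ from Lemma~\ref{lem:beta_j-ub} with the lower bound on $\abs{\beta_k}$ coming from inequality \eqref{eq:beta_i-lb1}, and then to control the leftover ratio of root differences via Lemma~\ref{lem:root-aprox}. Starting from the definition of $\gamma$ in \eqref{eq:siegl}, I would substitute $\abs{\beta_j}\leq 2\alpha^{5}\abs{y}^{-2}\alpha^{-2n}$ (Lemma~\ref{lem:beta_j-ub}) together with $\abs{\beta_k}\geq (\abs{y}/2)\,\abs{\alpha^{(k)}-\alpha^{(j)}}$ (which is inequality \eqref{eq:beta_i-lb1} applied with $i=k$). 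This yields
\[
    \abs{\gamma} \leq \frac{4\alpha^{5}}{\abs{y}^{3}\alpha^{2n}}\cdot \frac{\abs{\alpha^{(l)}-\alpha^{(k)}}}{\abs{\alpha^{(k)}-\alpha^{(j)}}\,\abs{\alpha^{(j)}-\alpha^{(l)}}}.
\]
It therefore suffices to show that the rightmost quotient of root differences is at most $2\alpha^{3-n}$, since that would combine to give exactly the target constant $4\alpha^{5}\cdot 2\alpha^{3}=8\alpha^{8}$ together with the required factor $\alpha^{-3n}$.

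Next, I would verify this quotient bound by case analysis on $j\in\set{1,2,3}$. Using Lemma~\ref{lem:root-aprox} together with the identity $L_n-F_n=2F_{n-1}$ from \eqref{eq:luc-fib}, each of the three differences appearing in the quotient is, up to an $L(\cdot)$-error of order $\alpha^{-2n}$, one of $F_n$, $L_n$ or $2F_{n-1}$. The linear estimates \eqref{eq:fib-bnd} together with $L_n\leq \alpha^{n}+\alpha^{-n}$ then reduce each case to an elementary Fibonacci/Lucas inequality. The tightest case is $j=1$, where the quotient is essentially $L_n/(2F_{n-1}F_n)$; here the desired bound reduces to $\alpha^{2}+\alpha^{2-n}\leq 4$, which holds with ample room once $n\geq 10$. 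The cases $j=2$ and $j=3$ yield a quotient of order $\alpha^{-n}$ and are therefore comfortably under $2\alpha^{3-n}$.

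Putting these estimates back together gives $\abs{\gamma}\leq 8\alpha^{8}\abs{y}^{-3}\alpha^{-3n}$, as claimed. The main obstacle is essentially careful bookkeeping rather than any conceptual difficulty: one has to propagate the $L(\cdot)$ error terms from Lemma~\ref{lem:root-aprox} through three parallel cases without letting the constants degrade, and verify that the assumption $n\geq 10$ really is large enough for the Fibonacci/Lucas leading terms to dominate these errors in the worst case $j=1$, where the slack in the desired constant is smallest.
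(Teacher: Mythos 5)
Your proposal is correct and follows essentially the same route as the paper: both factor $\gamma$ as $\beta_j/\beta_k$ times a quotient of root differences, bound $\abs{\beta_j}$ via Lemma~\ref{lem:beta_j-ub}, bound $\abs{\beta_k}$ from below by a triangle-inequality estimate of the form $\abs{y}$ times a root gap, and finish with a case analysis on the root differences using Lemma~\ref{lem:root-aprox} and \eqref{eq:fib-bnd}. The only difference is bookkeeping — the paper absorbs the gap $\abs{\alpha^{(k)}-\alpha^{(j)}}$ into the cleaner bound $\abs{\beta_k}\geq\abs{y}\alpha^{n-3}$ and bounds the remaining ratio by $4$, whereas you keep a combined quotient bounded by $2\alpha^{3-n}$; both yield the same constant $8\alpha^{8}$.
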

\begin{proof}
    First, recall that we have $|\beta_j|\leq 2\alpha^5\abs{y}^{-2} \cdot \alpha^{-2n}$ by Lemma \ref{lem:beta_j-ub}. Next, note that for $i \neq j$, we have
    \begin{align}\label{eq:beta_k}
        \abs{\beta_i}
        &= \abs{ x-\alpha^{(i)}y }
        = \abs{ \br{x-\alpha^{(j)}y} + y\br{\alpha^{(j)}-\alpha^{(i)}} } \nonumber\\
        &= \abs{ \beta_j + y\br{\alpha^{(j)}-\alpha^{(i)}} } \nonumber\\
        &\geq \abs{y} \abs{\alpha^{(j)}-\alpha^{(i)} } - \abs{\beta_j}.
    \end{align}
    Moreover, we have
    \[
        \abs{\alpha^{(j)} - \alpha^{(i)}} \geq L_n-F_n + L\br{10\alpha^{-2n}} \geq 2F_{n-1} - 10\alpha^{-2n} \geq 2 \alpha^{n-3},
    \]
    by Inequality \eqref{eq:fib-bnd}. Thus, using Lemma \ref{lem:beta_j-ub}, we obtain from Inequality \eqref{eq:beta_k} that
    \begin{align*}\label{eq:beta_k-2}
        |\beta_i| 
        &\geq \abs{y} \cdot 2 \alpha^{n-3} - 2\alpha^5\cdot \abs{y}^{-2} \cdot \alpha^{-2n} \nonumber \\
        &= \abs{y}\alpha^{n-3}  \br{ 2 -   2\alpha^5 \abs{y}^{-3} \alpha^{-3n+3} }
        \geq \abs{y} \alpha^{n} \alpha^{-3} .
    \end{align*}
    We thus have
    \begin{equation}\label{eq:gamma-ub1}
        \abs{ \frac{\beta_j}{\beta_k} } \leq 2\alpha^5\abs{y}^{-2} \cdot \alpha^{-2n} \cdot \br{ \abs{y} \alpha^{n} \alpha^{-3} }^{-1} = 2\alpha^8\abs{y}^{-3} \cdot \alpha^{-3n}.
    \end{equation}
    
    For the second term of $\abs{\gamma}$, we differentiate the cases for $l$.
    If $l = 1$, then
    \begin{align*}
        \abs{ \frac{ \alpha^{(l)} - \alpha^{(k)} }{ \alpha^{(l)} - \alpha^{(j)} } } &\leq \abs{ \frac{ \alpha^{(1)} - \alpha^{(2)} }{ \alpha^{(1)} - \alpha^{(3)} } }
        =  \abs{\frac{F_n-L_n + L(10\alpha^{-2n})  }{ F_n + L(9 \alpha^{-2n}) }} < 4.
    \end{align*}
    Analogously, if $l = 2$, then
    \begin{align*}
        \abs{ \frac{ \alpha^{(l)} - \alpha^{(k)} }{ \alpha^{(l)} - \alpha^{(j)} } } &\leq \frac{ L_n + L(7 \alpha^{-2n}) }{ L_n-F_n + L(10 \alpha^{-2n}) } < 4
    \end{align*}
    and if $l = 3$, then
    \begin{align*}
        \abs{ \frac{ \alpha^{(l)} - \alpha^{(k)} }{ \alpha^{(l)} - \alpha^{(j)} } } &\leq \frac{L_n+L(7\alpha^{-2n})}{F_n+L(9\alpha^{-2n})} <4.
    \end{align*}
    We thus have
    \[
        \abs{ \frac{ \alpha^{(l)} - \alpha^{(k)} }{ \alpha^{(l)} - \alpha^{(j)} } } \leq 4
    \]
    in all cases which yields, combined with Inequality \eqref{eq:gamma-ub1},
    \[
        \abs{\gamma} = \abs{ \frac{\beta_j}{\beta_k} } \abs{ \frac{ \alpha^{(l)} - \alpha^{(k)} }{ \alpha^{(l)} - \alpha^{(j)} } } \leq  8\alpha^8\abs{y}^{-3} \cdot \alpha^{-3n}.
    \]
\end{proof}

For $i \in \set{1,2,3}$ let $K_i = \mathbb{Q}(\alpha^{(i)})$. As per Equation \eqref{eq:root-coef}, both
\[
    \epsilon_i := \alpha^{(i)} \; \text{ and } \; \delta_i := \alpha^{(i)} - F_n
\]
are units in $\mathbb{Z}[\alpha^{(i)}]$. By a result of Thomas \cite[Theorem 3.9]{thomas_unit}, $\epsilon_i$ and $\delta_i$ even form a system of fundamental units of $\mathbb{Z}[\alpha^{(i)}]$. The regulator can then be bounded as follows.
\begin{lemma}\label{lem:reg-ub}
    Let
    \[
        G = G_i = \langle -1, \varepsilon_i, \delta_i \rangle = \br{\mathbb{Z}[\alpha^{(i)}]}^\times
    \]
    and $R_G = R_{G_i}$ denote the regulator of $G$, then we have
    \[
        2\br{\log\alpha}^2 n^2 \leq R_G \leq 2n^2.
    \]
\end{lemma}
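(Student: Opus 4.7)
The plan is to compute $R_G$ directly from Lemma~\ref{lem:root-log-aprox}. Since for $n \geq 10$ all three roots $\alpha^{(1)}, \alpha^{(2)}, \alpha^{(3)}$ of $f_n$ are real, the field $K_i$ is totally real of degree $3$ and hence the unit rank equals $r = 2$. Consequently $R_G$ is the absolute value of any $2 \times 2$ minor of the $2 \times 3$ matrix whose rows are indexed by the two fundamental units $\epsilon_i, \delta_i$ and whose columns record $\log\abs{\sigma(\epsilon_i)}$ and $\log\abs{\sigma(\delta_i)}$ as $\sigma$ ranges over the three real embeddings of $K_i$. By the product formula the three possible minors agree up to sign, so I would fix the embeddings sending $\alpha^{(i)}$ to $\alpha^{(1)}$ and to $\alpha^{(2)}$ and study
\[
R_G = \abs{\log\abs{\alpha^{(1)}}\cdot\log\abs{\alpha^{(2)} - F_n} - \log\abs{\alpha^{(2)}}\cdot\log\abs{\alpha^{(1)} - F_n}}.
\]

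Next I would plug in the four relevant asymptotic expansions from Lemma~\ref{lem:root-log-aprox} and read off the leading contribution, which works out to
\[
(n\log\alpha)(n\log\alpha) - (n\log\alpha)(-2n\log\alpha) = 3(\log\alpha)^2 n^2.
\]
All remaining contributions split into two groups: constant multiples of $n$ arising from products of a leading $n\log\alpha$ factor with one of the constants $\log\sqrt{5}$, $\log(5/(\sqrt{5}-1))$, $\log(1 - 1/\sqrt{5})$; and exponentially small quantities of order $n\alpha^{-2n}$ produced when an $L$-error term meets a leading factor. Since $2(\log\alpha)^2 \approx 0.464 < 3(\log\alpha)^2 \approx 0.696 < 2$, both of the claimed inequalities will follow once these corrections are bounded explicitly.

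The main obstacle is therefore purely one of bookkeeping: I must check that the slack between the leading coefficient $3(\log\alpha)^2$ and the enclosing coefficients $2(\log\alpha)^2$ and $2$ is not eroded by the $O(n)$ and $O(n\alpha^{-2n})$ corrections for any $n \geq 10$. Because all constants involved are explicit and small, and because $n^2$ dominates $n$ comfortably from $n = 10$ onwards, I would discharge this by estimating each of the four logarithms in the determinant separately via Lemma~\ref{lem:root-log-aprox}, expanding the product, and comparing coefficient by coefficient against the two target bounds; a single numerical verification at the boundary $n = 10$ together with the asymptotic analysis then closes the argument.
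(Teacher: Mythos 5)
Your proposal is correct and follows essentially the same route as the paper: express $R_G$ as the $2\times 2$ determinant of logarithms of conjugates of $\epsilon_i,\delta_i$, substitute the expansions from Lemma~\ref{lem:root-log-aprox}, read off the leading term $3(\log\alpha)^2n^2$, and absorb the $O(n)$ and exponentially small corrections into the slack between $2(\log\alpha)^2$ and $2$. The only cosmetic difference is that you invoke the product formula to reduce to a single choice of embeddings, whereas the paper checks each case of conjugate pairs directly.
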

\begin{proof}
    Since $[K_i:\mathbb{Q}] = 3$ and $K_i$ is a totally real number field, there are three embeddings $\sigma_1,\sigma_2, \sigma_3$ of $K_i$ into $\mathbb{R}$, with $\sigma_1 = \text{id}$. The regulator $R_G$ of $G$ can, up to sign, be expressed as the determinant of the matrix
    \begin{align*}
        \pm R_G = \det\begin{pmatrix}
            \log\abs{\epsilon_i} & \log\abs{\delta_i} \\
            \log\abs{\sigma_2(\epsilon_i)} & \log\abs{\sigma_2(\delta_i)}
        \end{pmatrix}.
    \end{align*}
    
    Since the $\sigma_i$ are $\mathbb{Q}$-automorphisms, we have $\sigma_i(t) = t$ for all rational $t$. Therefore the expression simplifies to
    \begin{align*}
        \pm R_G = \det\begin{pmatrix}
            \log\abs{\alpha^{(i)}} & \log\abs{\alpha^{(i)} - F_n} \\
            \log\abs{\sigma_2(\alpha^{(i)})} & \log\abs{\sigma_2(\alpha^{(i)})-F_n}
        \end{pmatrix}.
    \end{align*}
    Let us denote $\sigma_2(\alpha^{(i)}) = \alpha^{(i')}$. 
    By Lemma \ref{lem:root-log-aprox}, the determinant becomes, up to sign,
    \[
        \pm R_G = 3(\log\alpha)^2 n^2 + L\br{4n}.
    \]
    for each case for $(i,i')$. This gives
    \[
        2\br{\log \alpha}^2 n^2 \leq R_G \leq 2n^2
    \]
    for $n \geq 5$, which proves the asserted bounds for $R_G$.
\end{proof}

Since $\epsilon_i$ and $\delta_i$ are fundamental units in $\mathbb{Z}[\alpha^{(i)}]$, the units $\beta_1, \beta_2, \beta_3$ can be expressed as
\begin{equation}\label{eq:beta-factorize}
    \beta_i = \epsilon_i^{b_1} \delta_i^{b_2}
\end{equation}
for some integers $b_1,b_2$ non-dependent on $i$, since the $\beta_i$ are conjugates. We take the absolute value and the logarithm and get
\begin{equation}\label{eq:beta-log}
    \log\abs{\beta_i} = b_1 \log\abs{\epsilon_i} + b_2 \log\abs{\delta_i}.
\end{equation}
This will lead to a bound for $b_1$ and $b_2$.
\begin{lemma}\label{lem:b-bound}
    Let $\set{j,k,l} = \set{1,2,3}$. Then we have
    \[
        \max\set{\abs{b_1},\abs{b_2}} \leq 7n^{-1} \cdot \max\set{\log\abs{\beta_k},\log\abs{\beta_l}} .
    \]
\end{lemma}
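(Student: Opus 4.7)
The plan is to view equation \eqref{eq:beta-log} for the two indices $i=k$ and $i=l$ as a $2\times 2$ linear system in the unknowns $b_1,b_2$, solve it by Cramer's rule, and bound the result using the estimates from Lemmas \ref{lem:root-log-aprox} and \ref{lem:reg-ub}. Concretely, I would write
\[
    \begin{pmatrix} \log\abs{\epsilon_k} & \log\abs{\delta_k} \\ \log\abs{\epsilon_l} & \log\abs{\delta_l} \end{pmatrix}\begin{pmatrix} b_1 \\ b_2 \end{pmatrix} = \begin{pmatrix} \log\abs{\beta_k} \\ \log\abs{\beta_l} \end{pmatrix}.
\]
Since $K_i$ is totally real cubic, the unit rank equals $2$, and the absolute value of the determinant of any $2\times 2$ submatrix of the full $3\times 2$ matrix of conjugate log-absolute values of the fundamental units is the same, namely $R_G$. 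Hence the determinant of the coefficient matrix above equals $\pm R_G$, and by Lemma \ref{lem:reg-ub} we have $R_G \geq 2(\log\alpha)^2 n^2$.

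Next I would apply Cramer's rule to obtain
\[
    b_1 = \pm\frac{\log\abs{\beta_k}\log\abs{\delta_l} - \log\abs{\beta_l}\log\abs{\delta_k}}{R_G},\quad b_2 = \pm\frac{\log\abs{\beta_l}\log\abs{\epsilon_k} - \log\abs{\beta_k}\log\abs{\epsilon_l}}{R_G},
\]
take absolute values, and use Lemma \ref{lem:root-log-aprox} to bound each of $\abs{\log\abs{\epsilon_i}}$ and $\abs{\log\abs{\delta_i}}$ (for $i=1,2,3$) above by $2n\log\alpha + c$ for an explicit small constant $c$ valid for $n\geq 10$. Setting $M := \max\set{\log\abs{\beta_k},\log\abs{\beta_l}}$, the triangle inequality gives
\[
    \max\set{\abs{b_1},\abs{b_2}} \leq \frac{\br{\abs{\log\abs{\delta_k}} + \abs{\log\abs{\delta_l}} + \abs{\log\abs{\epsilon_k}} + \abs{\log\abs{\epsilon_l}}}\,M}{R_G}.
\]

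Finally, I would estimate the numerator. Lemma \ref{lem:root-log-aprox} shows that the sums $\abs{\log\abs{\delta_k}} + \abs{\log\abs{\delta_l}}$ and $\abs{\log\abs{\epsilon_k}} + \abs{\log\abs{\epsilon_l}}$ are each at most roughly $3n\log\alpha + O(1)$ (one of the two terms is $\approx 2n\log\alpha$, the other $\approx n\log\alpha$, irrespective of which of the three values $j$ takes). Dividing by the lower bound $2(\log\alpha)^2 n^2$ on $R_G$ yields a coefficient of the order $3/(2\log\alpha\cdot n)\approx 3.12/n$ times $M$, which comfortably lies below $7/n\cdot M$ for $n\geq 10$. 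The main obstacle is purely bookkeeping: carefully tracking the constants hidden in the $L(\cdot)$ error terms from Lemma \ref{lem:root-log-aprox} across all six possible choices of the ordered triple $(j,k,l)$, and confirming that the slack between $3.12$ and $7$ absorbs them for $n\geq 10$.
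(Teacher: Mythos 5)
Your proposal is correct and follows essentially the same route as the paper: the paper also inverts the $2\times2$ system (via $\absn{M^{-1}}_\infty$, which is just Cramer's rule in disguise), identifies $\abs{\det M}$ with $R_G$, and combines the lower bound $R_G\geq 2(\log\alpha)^2n^2$ with the entry estimates from Lemma~\ref{lem:root-log-aprox}. One small bookkeeping slip: your displayed bound puts the \emph{sum} of all four log-terms in the numerator, which in the worst case $\set{k,l}=\set{1,3}$ is about $6n\log\alpha$ and gives $3/(\log\alpha\cdot n)\approx 6.23/n$ rather than the quoted $3.12/n$ (the latter corresponds to taking the maximum of the two pair-sums, as the paper does) --- but either constant is below $7$, so the argument goes through.
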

\begin{proof}
    We rewrite Equation \eqref{eq:beta-log} for $k,l$ into the system of linear equations
    \begin{equation}\label{eq:beta-LGS}
        \begin{pmatrix}
            \log\abs{\epsilon_k} & \log\abs{\delta_k} \\
            \log\abs{\epsilon_l} & \log\abs{\delta_l}
        \end{pmatrix}
        \begin{pmatrix}
            b_1 \\ b_2
        \end{pmatrix}
        =
        \begin{pmatrix}
            \log\abs{\beta_k} \\ \log\abs{\beta_l}
        \end{pmatrix}.
    \end{equation}
    We call the above $2\times 2$ matrix $M$. Its determinant is, up to sign, equal to $R_G$ of Lemma \ref{lem:reg-ub}, as such
    \begin{equation}\label{eq:detM-lb}
        \abs{\det M} \geq 2\br{\log\alpha}^2 n^2.
    \end{equation}
    We can thus multiply Equation \eqref{eq:beta-LGS} with $M^{-1}$ and take the matrix-infinity norm $\absn\cdot_\infty$. This yields, in combination with the inequality $\absn{Ax}_\infty \leq \absn{A}_\infty \cdot \absn{ x}_\infty$,
    \begin{equation}\label{eq:b-ub}
        \max\set{\abs{b_1},\abs{b_2}} \leq \absn{M^{-1}}_\infty \max\set{\log\abs{\beta_k},\log\abs{\beta_l}}.
    \end{equation}
    The inverse matrix of $M$ is given by
    \begin{align*}
    \frac{1}{\det M}
        \begin{pmatrix}
            \log\abs{\delta_l} & -\log\abs{\delta_k} \\
            - \log\abs{\epsilon_l} & \log\abs{\epsilon_k}
        \end{pmatrix}
    \end{align*}
    and thus its infinity norm by
    \begin{equation}\label{eq:M-norm}
        \absn{M^{-1}}_\infty = \frac{1}{\abs{\det M}} \max\set{ \abs{\log\abs{\delta_l}}+\abs{\log\abs{\delta_k}}, \abs{\log\abs{\epsilon_l}} + \abs{\log\abs{\epsilon_k}} }.
    \end{equation}
    By Lemma \ref{lem:root-log-aprox}, all terms inside the maximum are linear in $n$. Furthermore, the highest coefficient of $n$ is given by 
    \[
        \abs{ \log\abs{\delta_1}} + \abs{\log\abs{\delta_3} } = 3\log\alpha \cdot n + L(6)
    \]
    and $3\log\alpha \cdot  n + L(6) < 3n$ for $n \geq 10$. This yields
    \[
        \max\set{ \abs{\log\abs{\delta_l}}+\abs{\log\abs{\delta_k}}, \abs{\log\abs{\epsilon_l}} + \abs{\log\abs{\epsilon_k}} } \leq 3n.
    \]
    If we combine this with Equation \eqref{eq:M-norm} and Inequality \eqref{eq:detM-lb}, we get
    \begin{equation*}
        \absn{M^{-1}}_\infty \leq \frac{3}{2} \br{\log\alpha}^{-2} n^{-1} < 7n^{-1}.
    \end{equation*}
    Inequality \eqref{eq:b-ub} thus becomes
    \[
        \max\set{\abs{b_1},\abs{b_2}} \leq 7n^{-1} \max\set{\log\abs{\beta_k},\log\abs{\beta_l}}.
    \]
\end{proof}

\begin{lemma}\label{lem:log-beta_kl-ub}
    For $\set{j,k,l} = \set{1,2,3}$ we have
    \[
        \max\set{\log\abs{\beta_k},\log\abs{\beta_l}} \leq \log\abs{y} + n\log\alpha + 1.
    \]
\end{lemma}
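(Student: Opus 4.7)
The plan is to bound $|\beta_i|$ for $i\in\{k,l\}$ by comparing directly to $\beta_j$, exploiting the fact that $\beta_j$ is extremely small. From the definitions $\beta_i = x - \alpha^{(i)}y$ and $\beta_j = x-\alpha^{(j)}y$ one immediately obtains the identity
$$\beta_i = \beta_j + \bigl(\alpha^{(j)}-\alpha^{(i)}\bigr) y,$$
and hence, by the triangle inequality,
$$|\beta_i| \leq |\beta_j| + \bigl|\alpha^{(j)}-\alpha^{(i)}\bigr|\cdot|y|.$$

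To estimate $|\alpha^{(j)}-\alpha^{(i)}|$ I would split into the three possible pairings using Lemma \ref{lem:root-aprox}: the three differences are approximated by $L_n-F_n = 2F_{n-1}$, $F_n$, and $L_n$ respectively, with errors of order $\alpha^{-2n}$. The worst case is dominated by $L_n \leq \alpha^n + 1$, so the crude uniform bound $|\alpha^{(j)}-\alpha^{(i)}| \leq \alpha^n + 2$ holds for $n \geq 10$. Simultaneously, Lemma \ref{lem:beta_j-ub} gives $|\beta_j| \leq 2\alpha^5|y|^{-2}\alpha^{-2n}$, which is negligible compared to $\alpha^n|y|$ under the standing assumptions $n\geq 10$ and $|y|\geq 2$.

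Combining these ingredients and taking logarithms,
$$\log|\beta_i| \leq \log|y| + n\log\alpha + \log\!\left(1 + 2\alpha^{-n} + \frac{|\beta_j|}{|y|\alpha^n}\right).$$
The argument of the outer logarithm is comfortably smaller than $e$ for $n\geq 10$ and $|y|\geq 2$, so the last summand is smaller than $1$. Taking the maximum over $i\in\{k,l\}$ then yields the claimed inequality.

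I do not foresee any serious obstacle: the argument is a one-line triangle-inequality estimate together with the already established approximations of the roots and of $|\beta_j|$. The only mild care required is tracking the explicit small constants so that the constant slack ``$+1$'' at the end of the asserted bound is indeed sufficient, which is immediate from the crude numerical estimates above.
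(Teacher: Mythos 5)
Your argument is correct and follows essentially the same route as the paper's proof: both rest on the identity $\beta_i = \beta_j + \br{\alpha^{(j)}-\alpha^{(i)}}y$, use Lemma \ref{lem:beta_j-ub} to make the $\beta_j$ contribution negligible, and bound the root separation by roughly $\alpha^n$ via Lemma \ref{lem:root-aprox}, absorbing the small relative error into the additive constant $1$. The paper merely inserts the coefficient $p_j\in\set{F_n,L_n,0}$ as an intermediate point in the triangle inequality before estimating, which is only a difference in bookkeeping.
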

\begin{proof}
    Let 
    \[
        (X-F_n)(X-L_n)X - 1 = (X-p_1)(X-p_2)(X-p_3) - 1,
    \]
    that is $p_1 = F_n, p_2 = L_n, p_3 = 0$. For $i \neq j$, we then have
    \[
        \abs{\frac{\beta_i}{y}} = \abs{\frac{x}{y}-\alpha^{(i)}} \leq \abs{\frac{x}{y}-\alpha^{(j)}} + \abs{\alpha^{(j)}-p_j} + \abs{p_j - \alpha^{(i)}}
    \]
    due to the triangle inequality. Factorising the right-hand side leads to
    \begin{equation}\label{eq:betai-ai-pj}
        \abs{\frac{\beta_i}{y}} \leq \abs{\alpha^{(i)} - p_j} \br{ 1 + \frac{\abs{\alpha^{(j)} - p_j}}{\abs{\alpha^{(i)}-p_j}} + \frac{\abs{\beta_j/y}}{\abs{\alpha^{(i)}-p_j}} }.
    \end{equation}
    We estimate the terms inside the bracket. Using Lemma \ref{lem:root-aprox}, we get
    \[
        \abs{\alpha^{(i)}-p_j} \geq \br{ F_n + L(9\alpha^{-2n} }
    \]
    and thus, by Inequality \eqref{eq:fib-bnd},
    \begin{equation}\label{eq:ai-pj-lb}
        \abs{\alpha^{(i)}-p_j} \geq \alpha^{n-2} - 9\alpha^{-2n} \geq \alpha^{n-3} = \alpha^{-3} \alpha^n
    \end{equation}
    on the one hand, and
    \begin{equation}\label{eq:ai-pj-ub}
        \abs{\alpha^{(i)}-p_j} 
        \leq \br{L_n+L\br{4\alpha^{-2n}} - 0} \leq \alpha^n + \alpha^{n-2} + 4 \alpha^{-2n}
    \end{equation}
    by Inequality \eqref{eq:fib-bnd} on the other. Moreover, the root $\alpha^{(j)}$ is close to the coefficient $p_j$, by Lemma \ref{lem:root-aprox} as close as
    \begin{equation}\label{eq:aj-pj-ub}
        \abs{\alpha^{(j)} - p_j} \leq \abs{ F_n + L\br{6\alpha^{-2n}} - F_n } \leq 6 \alpha^{-2n}.
    \end{equation}
    Since we assume that $\abs{y}\geq 2$, we have $\abs{\beta_j} \leq \frac{1}{2}\alpha^5\alpha^{-2n}$  by Lemma \ref{lem:beta_j-ub}. Combining this with Inequalities \eqref{eq:ai-pj-lb} and \eqref{eq:aj-pj-ub} yields
    \begin{align}\label{eq:as-ps-ub}
        \br{ 1 + \frac{\abs{\alpha^{(j)} - p_j}}{\abs{\alpha^{(i)}-p_j}} + \frac{\abs{\beta_j/y}}{\abs{\alpha^{(i)}-p_j}} } &\leq \br{ 1 + 6\alpha^3 \alpha^{-3n} + \frac{1}{4}\alpha^8\alpha^{-3n} } \nonumber \\
        &\leq \br{1 + 38 \alpha^{-3n}}.
    \end{align}
    We combine Inequalities \eqref{eq:betai-ai-pj} and \eqref{eq:as-ps-ub}, multiply with $\abs{y}$ and arrive at
    \[
        \abs{\beta_i} \leq \abs{\alpha^{(i)}-p_j} \cdot |y| \cdot \br{1+38\alpha^{-3n}}.
    \]
    Using Inequality \eqref{eq:ai-pj-ub} and taking logarithms yields
    \[
        \log\abs{\beta_i} \leq \log\br{\alpha^n + \alpha^{n-2} + 4\alpha^{-2n} } + \log\abs{y} + \log\br{1+38\alpha^{-3n}}.
    \]
    We then make full use of Lemma \ref{lem:log-trick} and arrive at
    \[
        \log\abs{\beta_i} \leq \log\br{\alpha^n} + 2\br{\alpha^{-2}+4\alpha^{-3n}} + \log\abs{y} + 76\alpha^{-3n}
    \]
    and thus
    \[
        \log\abs{\beta_i} \leq \log\abs{y} + n \log\alpha + 1.
    \]
\end{proof}

\section{Linear Forms in logarithms and an effective bound for n}

We go back to Equation \eqref{eq:siegl},
\[
    \frac{\beta_l}{\beta_k} \frac{ \alpha^{(j)} - \alpha^{(k)} }{ \alpha^{(j)} - \alpha^{(l)} } - 1 = \gamma,    
\]
and use Equation \eqref{eq:beta-factorize} to rewrite the expression as
\[
    \br{\frac{\epsilon_l}{\epsilon_k}}^{b_1} \br{\frac{\delta_l}{\delta_k}}^{b_2} \frac{ \alpha^{(j)} - \alpha^{(k)} }{ \alpha^{(j)} - \alpha^{(l)} } = 1+\gamma.
\]
We then define the linear form in logarithms $\Uplambda$ by
\[
    \Uplambda = \log\abs{1+\gamma} = b_1 \log\abs{\frac{\epsilon_l}{\epsilon_k}} + b_2 \log\abs{\frac{\delta_l}{\delta_k}} + \log\abs{\frac{ \alpha^{(j)} - \alpha^{(k)} }{ \alpha^{(j)} - \alpha^{(l)} }}.
\]
Due to Lemma \ref{lem:log-ub} and \ref{lem:gamma-ub} and $\abs{y}\geq 2$, we have
\begin{equation}\label{eq:lambda-ub}
    \abs{\Uplambda} \leq 2 \abs{\gamma} \leq 16\alpha^8\abs{y}^{-3} \cdot \alpha^{-3n} \leq 2\alpha^8 \alpha^{-3n}.
\end{equation}

We want to immediately apply Theorem \ref{thm:baker-lb} to bound $\abs{\Uplambda}$ from below. The problem with this, however, lies in the expressions $\epsilon_l/\epsilon_k$ and $\delta_l/\delta_k$ having huge heights that would render the bound useless for our purposes. Therefore, we first rewrite the linear form to get the (exponential) dependency on $n$ out of the logarithms.

To that end we need to differentiate the cases for $j$. Let $j=1$ and choose $k=3, l=2$.

By Lemma $\ref{lem:root-aprox}$ we get
\begin{align*}
    \frac{\epsilon_2}{\epsilon_3} = \frac{\alpha^{(2)}}{\alpha^{(3)}} &= \frac{\alpha^n \pm \alpha^{-n} + L\br{4 \alpha^{-2n}} }{\sqrt{5} \alpha^{-2n} + L\br{\alpha^{-4n}} } =  \frac{\alpha^n \br{1 \pm \alpha^{-2n} + L\br{4 \alpha^{-3n}} }}{\sqrt{5} \alpha^{-2n} \br{ 1 + L\br{\sqrt{5}^{-1}\alpha^{-2n}}} }\\
    &= \frac{1}{\sqrt{5}} \alpha^{3n} \br{ 1 + L\br{\frac{3}{2} \alpha^{-2n}} },
\end{align*}
and thus, combined with Lemma \ref{lem:log-trick},
\begin{equation}\label{eq:log-epsfrac-aprox}
    \log\abs{\frac{\epsilon_2}{\epsilon_3}} = 3n \log \alpha - \log \sqrt{5} + L(3 \alpha^{-2n}).
\end{equation}
Similarily,
\begin{align*}
    \frac{\delta_2}{\delta_3} &= \frac{ \br{ 1- \frac{1}{\sqrt{5}} } \alpha^n \pm \br{ 1+ \frac{1}{\sqrt{5}} } \alpha^{-n} + L(10 \alpha^{-2n}) }{ - \frac{1}{\sqrt{5}} \alpha^n \pm \frac{1}{\sqrt{5}} \alpha^{-n} + L(3 \alpha^{-2n}) } \\
    &= \br{1-\sqrt{5}} \br{ 1 + L\br{\frac{5}{2} \alpha^{-2n}} }
\end{align*}
and thus
\begin{equation}\label{eq:log-deltfrac-aprox}
    \log\abs{\frac{\delta_2}{\delta_3}} = \log\br{\sqrt{5}-1} + L(5 \alpha^{-2n}).
\end{equation}
Lastly,
\begin{align*}
    \frac{ \alpha^{(1)}-\alpha^{(3)} }{ \alpha^{(1)}-\alpha^{(2)} } &= \frac{ \frac{1}{\sqrt{5}} \alpha^n \pm \frac{1}{\sqrt{5}} \alpha^{-n} + L(9 \alpha^{-2n}) }{ \br{\frac{1}{\sqrt{5}}-1} \alpha^n \pm \br{\frac{1}{\sqrt{5}}+1} \alpha^{-n} + L(10\alpha^{-2n}) } \\
    &= \frac{1}{1-\sqrt{5}} \br{ 1 + L\br{\frac{3}{2} \alpha^{-2n}} }
\end{align*}
and thus
\begin{equation}\label{eq:log-alphfrac-aprox}
    \log\abs{\frac{ \alpha^{(1)}-\alpha^{(3)} }{ \alpha^{(1)}-\alpha^{(2)} }} = -\log(\sqrt{5}-1) + L(3\alpha^{-2n}).
\end{equation}
If we combine Equations \eqref{eq:log-epsfrac-aprox}-\eqref{eq:log-alphfrac-aprox} and shift the $L$-terms into the upper bound, then the original linear form
\[
    \abs{\Uplambda} = \abs{b_1 \log\abs{\frac{\epsilon_l}{\epsilon_k}} + b_2 \log\abs{\frac{\delta_l}{\delta_k}} + \log\abs{\frac{ \alpha^{(j)} - \alpha^{(k)} }{ \alpha^{(j)} - \alpha^{(l)} }}} \leq 2\alpha^8 \alpha^{-3n}
\]
transforms into a linear form
\[
    \xi = x_1 \log\alpha + x_2 \log\sqrt{5} + x_3 \log\br{\sqrt{5}-1} 
\]
with
\begin{equation}\label{eq:x_i-case1}
    x_1 = 3nb_1, \; x_2 = -b_1, \; x_3 = b_2 - 1.
\end{equation}

Analogously, the linear form $\xi$ has the same logarithms in the case $j=2$, where we choose $(k,l) = (1,3)$, and in the case $j = 3$, where we choose $(k,l) = (1,2)$. The coefficients are
\begin{equation}\label{eq:x_i-case2}
    x_1 = -3nb_1 + 3nb_2 , \; x_2 = 2b_1-3b_2-1, \; x_3 = b_2 + 1
\end{equation}
and
\begin{equation}\label{eq:x_i-case3}
    x_1 = 3nb_2, \; x_2 = b_1-3b_2-1, \; x_3 = 2b_2
\end{equation}    
respectively.

In all three cases, the respective L-terms can be bounded from above by the term ${(3\abs{b_1}+5\abs{b_2}+5) \alpha^{-2n}}$, thus
\begin{equation}\label{eq:xi-ub}
    \abs{\xi} \leq 2\alpha^8 \alpha^{-3n} + \br{3\abs{b_1}+5\abs{b_2}+5} \alpha^{-2n}.
\end{equation}

\begin{lemma}
    The linear form $\xi$ fulfils
    \[
        \xi \neq 0
    \]
    in all cases.
\end{lemma}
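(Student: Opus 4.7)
The plan is a proof by contradiction: I would assume $\xi = 0$ and deduce $|y| \leq 1$, contradicting the running assumption $|y| \geq 2$. The crucial observation is that the three logarithms appearing in $\xi$ are not $\mathbb{Q}$-linearly independent: from $\alpha = (1+\sqrt{5})/2$ one has $\sqrt{5} - 1 = 2\alpha^{-1}$. Thus $\xi = 0$ rewrites as the multiplicative identity
\[
    \alpha^{x_1 - x_3} \cdot \sqrt{5}^{\,x_2} \cdot 2^{x_3} = 1
\]
in $\mathbb{Q}(\sqrt{5})^\times$. Applying $|N_{\mathbb{Q}(\sqrt{5})/\mathbb{Q}}(\cdot)|$ and using $|N(\alpha)|=1$, $|N(\sqrt{5})|=5$, $|N(2)|=4$ reduces this to $5^{x_2}\cdot 4^{x_3} = 1$ in $\mathbb{Q}$, which by unique factorization in $\mathbb{Z}$ forces $x_2 = x_3 = 0$; then $\alpha^{x_1}=1$ together with $\alpha > 1$ gives $x_1 = 0$.

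I would then substitute $x_1 = x_2 = x_3 = 0$ into each of \eqref{eq:x_i-case1}, \eqref{eq:x_i-case2}, \eqref{eq:x_i-case3} and solve the resulting linear systems for $(b_1,b_2)$, obtaining $(0,1)$, $(-1,-1)$, and $(1,0)$ respectively. Using the identity $\alpha^{(i)}(\alpha^{(i)} - F_n)(\alpha^{(i)} - L_n) = 1$ (which is $f_n(\alpha^{(i)}) = 0$ rearranged), these three possibilities translate to $|\beta_i| = |\alpha^{(i)} - F_n|$, $|\beta_i| = |\alpha^{(i)} - L_n|$, and $|\beta_i| = |\alpha^{(i)}|$ respectively. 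Writing uniformly $|\beta_i| = |\alpha^{(i)} - c|$ with $c \in \{F_n, L_n, 0\}$ depending on the case, the quotient $u := \beta_1/(\alpha^{(1)} - c)$ is a unit of $\mathbb{Z}[\alpha^{(1)}]$ each of whose real conjugates has absolute value $1$; hence $u$ is a root of unity in the totally real cubic field $K_1$, so $u = \eta \in \{\pm 1\}$. Galois conjugation then propagates this to $\beta_i = \eta(\alpha^{(i)} - c)$ for every $i$ with a common sign $\eta$.

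Substituting $\beta_i = x - \alpha^{(i)} y$ and rearranging yields $x + \eta c = (y + \eta)\alpha^{(i)}$, valid for each of the three roots $\alpha^{(i)}$ of $f_n$. Since $f_n$ is irreducible over $\mathbb{Q}$ for $n \geq 3$ (the only rational-root candidates $\pm 1$ are eliminated by the direct check $f_n(1) = (F_n-1)(L_n-1)-1 > 0$ and $f_n(-1) < 0$), each $\alpha^{(i)}$ is irrational, so necessarily $y + \eta = 0$, i.e., $|y| = 1$, contradicting $|y|\geq 2$. The main obstacle is Step 1: recognising the hidden multiplicative dependence $\sqrt{5}-1 = 2\alpha^{-1}$ that collapses an ostensibly $3$-dimensional linear form into a genuinely $2$-dimensional one, and then interpreting the degenerate case $x_1=x_2=x_3=0$ via the minimal polynomial $f_n$; after that, the argument is essentially bookkeeping.
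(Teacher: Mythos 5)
Your proof is correct and follows essentially the same route as the paper: deduce $x_1=x_2=x_3=0$, solve \eqref{eq:x_i-case1}--\eqref{eq:x_i-case3} for $(b_1,b_2)$, and recognise each resulting $(x,y)$ as a trivial solution, contradicting $\abs{y}\geq 2$. You in fact supply more detail than the paper at the first step: the paper simply asserts that $\log\alpha$, $\log\sqrt{5}$, $\log(\sqrt{5}-1)$ are $\mathbb{Q}$-linearly independent, whereas your norm computation ($5^{x_2}4^{x_3}=1$ forces $x_2=x_3=0$, then $\alpha^{x_1}=1$ forces $x_1=0$) actually proves it; you also handle the possible torsion sign in the unit factorisation and check irreducibility of $f_n$, both of which the paper leaves implicit. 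One framing remark is wrong, though harmless: the identity $\sqrt{5}-1=2\alpha^{-1}$ is a relation involving $\log 2$, which is not one of the three logarithms in $\xi$, so it does \emph{not} exhibit a $\mathbb{Q}$-linear dependence among $\log\alpha$, $\log\sqrt{5}$, $\log(\sqrt{5}-1)$ --- indeed your own argument proves these three are independent, contrary to your opening sentence. The substance of the proof is unaffected.
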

\begin{proof}
    Since $\log\alpha, \log\sqrt{5}, \log\br{\sqrt{5}-1}$ are $\mathbb{Q}$-linearly independent, $\xi = 0$ would imply ${x_1=x_2=x_3 = 0}$.
    
    In the case $j=1$, this implies $b_1 = 0, b_2 = 1$. By Equation \eqref{eq:beta-factorize}, this means
    \[
        x-\alpha^{(i)}y = \alpha^{(i)} - F_n
    \]
    and thus $(x,y) = -(F_n,1)$, which we ruled out.
    
    In the case $j=2$, the expression $x_1 = 0$ implies $b_1 = b_2$. It then follows from $x_2 = 0$, that $b_1 = b_2 = -1$.
    Thus, Equation \eqref{eq:beta-factorize} implies
    \[
        x - \alpha^{(i)}y = \br{ \alpha^{(i)} }^{-1} \br{ \alpha^{(i)} - F_n }^{-1}.
    \]
    This implies
    \[
        \br{ -\alpha^{(i)} y + x }\br{ \alpha^{(i)} - F_n } \br{\alpha^{(i)}} - 1 = 0
    \]
    for each $\alpha^{(i)}$, which means that the $\alpha^{(i)}$ are roots of the polynomial
    \[
        \br{-Xy+x}\br{X-F_n}X - 1.
    \]
    The minimal polynomial of the $\alpha^{(i)}$ is given by Equation \eqref{eq:defining-polynomial}, however, and it follows that $(x,y) = -(L_n,1)$, which again we ruled out.
    
    In the case $j=3$, it follows from $x_1 = 0$ that $b_2 = 0$ and from $x_2 = 0$ that $b_1 = 1$. Equation \eqref{eq:beta-factorize} then implies
    \[
        x-\alpha^{(i)}y = \alpha^{(i)}
    \]
    and thus $(x,y) = -(0,1)$, which we also ruled out.
\end{proof}

Before we can apply the lower bound from Theorem $\ref{thm:baker-lb}$, we have to bound the coefficients $x_i$ of $\xi$ from above: 

\begin{lemma}\label{lem:x_i-ub}
    We have
    \[
        \max\set{\abs{x_1},\abs{x_2},\abs{x_3}} \leq 42 \log\abs{y} + 42\log\alpha \cdot n + 43.
    \]
\end{lemma}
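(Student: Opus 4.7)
The plan is to chain together Lemma \ref{lem:log-beta_kl-ub} and Lemma \ref{lem:b-bound} to bound $|b_1|$ and $|b_2|$ in terms of $\log|y|$ and $n$, and then substitute these bounds into the explicit case-by-case formulas \eqref{eq:x_i-case1}, \eqref{eq:x_i-case2}, \eqref{eq:x_i-case3} for the coefficients $x_i$.

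First I would combine the two lemmas to get the estimate
\[
    \max\set{\abs{b_1},\abs{b_2}} \leq 7n^{-1}\bigl(\log\abs{y} + n\log\alpha + 1\bigr).
\]
Then I would inspect each of the three cases for $j$ and bound the absolute values of $x_1, x_2, x_3$ by elementary manipulations. In each case the dominant coefficient is the one multiplied by $3n$. The worst case is $j=2$: there $|x_1| = |3n(b_2-b_1)|$, which by the triangle inequality is at most $6n\cdot\max\{|b_1|,|b_2|\}$, producing the factor $6n \cdot 7n^{-1} = 42$ in front of $(\log\abs{y}+n\log\alpha +1)$. For the cases $j=1$ and $j=3$, the factor in front of $n$ is only $3n$, so the corresponding bound has a coefficient of $21$, which is absorbed by $42$. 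The low-order coefficients $x_2$ and $x_3$ are $O(\max\{|b_1|,|b_2|\})$ plus a bounded additive constant, and these are also dominated by the bound coming from $x_1$ in case $j=2$.

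Putting everything together gives
\[
    \max\set{\abs{x_1},\abs{x_2},\abs{x_3}} \leq 42\bigl(\log\abs{y} + n\log\alpha + 1\bigr) = 42\log\abs{y} + 42\log\alpha\cdot n + 42,
\]
which is stronger than the stated bound (the extra $+1$ yielding $+43$ gives some slack for the small additive constants appearing in $x_2$ and $x_3$).

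I do not expect any real obstacle here: the argument is purely mechanical substitution once the $b_i$ are bounded. The only point that requires care is to identify the worst case ($j=2$), since it involves both $b_1$ and $b_2$ with opposite signs in $x_1$, forcing the factor $6n$ rather than $3n$ and thus the constant $42$ rather than $21$.
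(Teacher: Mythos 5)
Your proposal is correct and follows essentially the same route as the paper: the paper likewise reduces to $\max\{\abs{x_1},\abs{x_2},\abs{x_3}\}\leq 6n\max\{\abs{b_1},\abs{b_2}\}+1$ (the worst case being the coefficient $3n(b_2-b_1)$), then applies Lemma \ref{lem:b-bound} and Lemma \ref{lem:log-beta_kl-ub} to obtain the constant $42$ and the additive $43$. No gaps.
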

\begin{proof}
    By looking at Equations \eqref{eq:x_i-case1}, \eqref{eq:x_i-case2} and \eqref{eq:x_i-case3} we have
    \[
        \max\set{\abs{x_1},\abs{x_2},\abs{x_3}} \leq 6n \max\set{\abs{b_1},\abs{b_2}} + 1
    \]
    for $n \geq 1$ in all cases. Additionally, Lemma \ref{lem:b-bound} gives us
    \[
        \max\set{\abs{x_1},\abs{x_2},\abs{x_3}} \leq 42 \cdot \max\set{\log\abs{\beta_k},\log\abs{\beta_l}} + 1.
    \]
    By Lemma \ref{lem:log-beta_kl-ub}, this then yields
    \[
        \max\set{\abs{x_1},\abs{x_2},\abs{x_3}} \leq 42 \cdot \br{ \log\abs{y} + n\log\alpha + 1 } + 1
    \]
    and thus
    \[
        \max\set{\abs{x_1},\abs{x_2},\abs{x_3}} \leq 42 \log\abs{y} + 42\log\alpha \cdot n + 43.
    \]
\end{proof}

We now use Baker's and Wüstholz's lower bound (Theorem \ref{thm:baker-lb}) for the linear form $\xi$ to derive an exponential lower bound for $\log\abs{y}$.

\begin{lemma}\label{lem:logy-lb}
    One has
    \[
        \log\abs{y} \geq \exp \br{ \frac{2 \log \alpha}{1+C} \cdot n - \log n - 5}
    \]
    with $C = 17496\cdot 64^5 \cdot \log\br{12} \cdot \log\alpha \cdot \log\sqrt{5} \cdot \log 2 \approx 1.253\cdot10^{13}$.
\end{lemma}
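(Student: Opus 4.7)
The plan is to compare the Baker--Wüstholz lower bound on the linear form $\xi$ (Theorem~\ref{thm:baker-lb}) with the upper bound (\ref{eq:xi-ub}), after expressing all ingredients in terms of $\log\abs{y}$ and $n$.

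For Baker--Wüstholz, the three logarithmic arguments $\alpha$, $\sqrt{5}$, $\sqrt{5}-1$ all lie in $K = \mathbb{Q}(\sqrt{5})$ of degree $D=2$. Reading off their minimal polynomials $X^2-X-1$, $X^2-5$ and $X^2+2X-4$ gives $h(\alpha)=\tfrac{1}{2}\log\alpha$, $h(\sqrt{5})=\log\sqrt{5}$, $h(\sqrt{5}-1)=\log 2$; a short check confirms these dominate both $\abs{\log\gamma_i}/D$ and $0.16/D$, so I may take $h_1=\tfrac{1}{2}\log\alpha$, $h_2=\log\sqrt{5}$, $h_3=\log 2$. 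With $t=3$ the Baker--Wüstholz prefactor $18\cdot 4!\cdot 3^4\cdot 64^5\cdot\log 12$ multiplied by $h_1 h_2 h_3$ is precisely the constant $C$ of the lemma. Since the previous lemma already guarantees $\xi\neq 0$ and Lemma~\ref{lem:x_i-ub} bounds $B':=\max\set{\abs{x_1},\abs{x_2},\abs{x_3}}$, Theorem~\ref{thm:baker-lb} delivers
\[
\log\abs{\xi} \geq -C\log B', \qquad B' \leq 42\log\abs{y} + 42\log\alpha\cdot n + 43.
\]

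On the other side, I feed the coefficient estimates from Lemma~\ref{lem:b-bound} and Lemma~\ref{lem:log-beta_kl-ub} into the upper bound (\ref{eq:xi-ub}); these give $\max\set{\abs{b_1},\abs{b_2}} \leq 7n^{-1}(\log\abs{y}+n\log\alpha+1)$, so $3\abs{b_1}+5\abs{b_2}+5$ is bounded by an absolute constant times $\log\abs{y}/n+1$. Since $2\alpha^8\alpha^{-3n}$ is swamped by $\alpha^{-2n}$ for $n\geq 10$, this yields
\[
\abs{\xi} \leq c_0\br{\log\abs{y}/n + 1}\alpha^{-2n}
\]
for an explicit absolute constant $c_0$, and hence $\log\abs{\xi} \leq -2n\log\alpha + \log(\log\abs{y}/n+1) + \log c_0$.

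Combining the two inequalities in the regime $\log\abs{y}\geq n$ gives $\log B' \leq \log\log\abs{y} + O(1)$ and $\log(\log\abs{y}/n+1) \leq \log\log\abs{y} - \log n + O(1)$, and the two $\log\log\abs{y}$ contributions add to produce
\[
2n\log\alpha + \log n \leq (1+C)\log\log\abs{y} + \text{const},
\]
which rearranges to the claimed bound once the error is absorbed into $-\log n - 5$. In the complementary regime $\log\abs{y}<n$ the right-hand side of the claim is either already $\leq\log 2$, hence trivially satisfied by $\abs{y}\geq 2$, or otherwise the same combination forces $2n\log\alpha \leq C\log n + O(1)$, which is impossible.

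The main obstacle is the small but essential arithmetic wrinkle that makes the denominator $1+C$ rather than $C$: taking logarithms of the upper bound on $\abs{\xi}$ contributes a $\log\log\abs{y}$ term that lines up with the $C\log\log\abs{y}$ coming from $\log B'$, so the two combine to give $(1+C)\log\log\abs{y}$. The remainder is careful constant-tracking, ensuring that the auxiliary absolute constants are small enough to fit inside the $-\log n - 5$ correction term.
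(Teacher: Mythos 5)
Your proposal follows the paper's proof in all essentials: the same Baker--Wüstholz data ($t=3$, $D=2$, the three heights giving the same constant $C$), the same coefficient bound from Lemma~\ref{lem:x_i-ub}, and the same upper bound for $\abs{\xi}$ assembled from Inequality~\eqref{eq:xi-ub} together with Lemmas~\ref{lem:b-bound} and~\ref{lem:log-beta_kl-ub}. The only divergence is the final combination: the paper avoids your case split by estimating $\log B' \le \log\log\abs{y} + \log n + 4$ uniformly from $\log\abs{y}\ge\log 2$ and $n\ge 10$, and then solves for $\log\log\abs{y}$ in one line. Your complementary regime $\log\abs{y}<n$ is the one soft spot: the additive term in $2n\log\alpha \le C\log n + O(1)$ is really of size $O(C)\approx 5\cdot 10^{13}$, since Baker's bound multiplies the entire $\log B'$, constant term included, by $C$; that inequality is perfectly satisfiable for all $n\lesssim 5\cdot 10^{14}$ and is only contradicted by the standing assumption that the claimed bound is non-trivial, i.e.\ $2n\log\alpha > (1+C)\br{\log n + 4.64}$. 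Checking that the two $O(C)$ terms actually separate (roughly $4.64(1+C)$ against $4.2\,C$) does work out but is a tight numerical comparison that you should make explicit; the paper's uniform estimate sidesteps it entirely.
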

\begin{proof}
    By Theorem \ref{thm:baker-lb}, we have
    \begin{align*}
        \log\abs{\xi} &\geq - 18(3+1)!3^{3+1}(32\cdot 2)^{3+2}\log(2\cdot 3 \cdot 2) h_1 \cdots h_t \log B \\
        &= -34992 \cdot 64^5 \cdot \log(12) \cdot h_1 h_2 h_3 \cdot \log \max\set{\abs{x_1},\abs{x_2},\abs{x_3}},
    \end{align*}
    since $\xi$ is a linear form in the $t=3$ logarithms $\log\alpha, \log\sqrt{5}, \log\br{\sqrt{5}-1}$ and ${D = [\mathbb{Q}(\alpha, \sqrt{5}, \sqrt{5}-1):\mathbb{Q}]} = [\mathbb{Q}(\sqrt{5}):\mathbb{Q}] = 2$. 
    
    The minimal polynomials of $\alpha, \sqrt{5}$ and $\sqrt{5}-1$ are given by
    \begin{align*}
        X^2-X-1 &= (X-\alpha)(X-\beta) \\
        X^2-5 &= (X-\sqrt{5})(X+\sqrt{5}) \\
        X^2+2X-4 &= (X-(\sqrt{5}-1))(X+(\sqrt{5}+1))
    \end{align*}
    respectively. Thus, the logarithmic heights are $\frac{1}{2}\log\alpha, \log\sqrt{5}$ and $\log 2$ respectively.
    Together with Lemma \ref{lem:x_i-ub}, this yields
    \[
        \log\abs{\xi} \geq -C \cdot \log\br{ 42 \log\abs{y} + 42\log\alpha \cdot n + 43 }
    \]
    with $C = 17496\cdot 64^5 \cdot \log\br{12} \cdot \log\alpha \cdot \log\sqrt{5} \cdot \log 2 \approx 1.253\cdot10^{13}$. We estimate the last term using $\log\abs{y} \geq \log 2$ and $n\geq 10$:
    \begin{align*}
        &\log\br{ 42 \log\abs{y} + 42\log\alpha \cdot n + 43 } \\
        = &\log\log\abs{y} + \log\br{ 42 + \frac{42}{\log\abs{y}}\log\alpha \cdot n + \frac{43}{\log\abs{y}} } \\
       \leq & \log\log\abs{y} + \log\br{30n+103} \leq \log n + \log\log\abs{y} + 4.
    \end{align*}
    Thus, the lower bound for $\log\abs{\xi}$ becomes
    \begin{equation}\label{eq:xi-lb}
        \log\abs{\xi} \geq -C \br{ \log n + \log\log\abs{y} + 4 }.
    \end{equation}

    On the other hand, recall that Inequality \eqref{eq:xi-ub} gives us an upper bound for $\abs{\xi}$, namely
    \[
        \abs{\xi} \leq 2\alpha^8 \alpha^{-3n} + (3\abs{b_1}+5\abs{b_2}+5) \alpha^{-2n}.
    \]
    Plugging in the bound from Lemma \ref{lem:b-bound} gives us
    \[
        \abs{\xi} \leq 2\alpha^8 \alpha^{-3n} + \br{ 56 n^{-1}  \max\set{\log\abs{\beta_k},\log\abs{\beta_l}} + 5 } \alpha^{-2n}.
    \]
    By Lemma \ref{lem:log-beta_kl-ub} and with $n\geq 10$, this then yields
    \begin{align}\label{eq:xi-ub2}
        \abs{\xi} &\leq \alpha^{-2n} \br{ 2\alpha^8\alpha^{-n} + 56n^{-1}\log\abs{y} + 56\log\alpha + 56n^{-1} + 5 } \nonumber \\
        &\leq \alpha^{-2n}\br{ 6 \log\abs{y} + 39 }.
    \end{align}
    We take the logarithm of Inequality \eqref{eq:xi-ub2} and compare it with Inequality \eqref{eq:xi-lb}, which yields
    \begin{align}
        -C \br{\log n + \log\log\abs{y} + 4} &\leq -2n\log\alpha + \log\br{ 6 \log\abs{y} + 39 } \nonumber \\
        &\leq -2n \log\alpha + \log\log\abs{y} + 5. \nonumber
    \end{align}
    We rewrite this into
    \[
        \log\log\abs{y} \br{1+C} \geq 2\log\alpha \cdot n - C \log n - \br{5+4C}
    \]
    and thus get
    \[
        \log\abs{y} \geq \exp \br{ \frac{2 \log \alpha}{1+C} \cdot n - \log n - 5}.
    \]
\end{proof}

Next, we use Bugeaud's and Győry's upper bound (Theorem \ref{thm:bugy-ub}) for $\log\abs{y}$.
\begin{lemma}\label{lem:logy-ub}
    We have
    \[
        \log\abs{y} \leq 3^{94} \cdot 2n^2 \cdot \log\br{2n^2} \cdot \br{ 2n^2 + (2n-1) \log\alpha + 1 }.
    \]
    \begin{proof}
        Let $K = \mathbb{Q}(\alpha^{(1)})$. By Lemma \ref{lem:reg-ub} we have
        \[
            R_K \leq 2n^2
        \]
        for the regulator $R_K$. The unit-rank is $r = 2$, an upper bound for the coefficients of the Thue Equation \eqref{eq:thue} is given by $H = \alpha^{2n-1} \geq L_n\cdot F_n = F_{2n}$ by Equation \eqref{eq:fib-bnd} and, finally, $B = \max\{1,e\}=e$.
        
        Theorem $\ref{thm:bugy-ub}$ then yields
        \begin{align*}
            \log\abs{y} &\leq 3^{2+27} \cdot 3^{14+19} 3^{6+12+14} \cdot \br{2n^2} \cdot \log\br{2n^2} \cdot\br{ 2n^2 + \log\br{ \alpha^{2n-1}  e } } \\
            &= 3^{94} \cdot 2n^2 \cdot \log\br{2n^2} \cdot \br{ 2n^2 + (2n-1) \log\alpha + 1 }.
        \end{align*}
    \end{proof}
\end{lemma}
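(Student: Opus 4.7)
The plan is to apply Theorem \ref{thm:bugy-ub} directly to the Thue equation \eqref{eq:thue}, with $F(X,Y) = (X-F_nY)(X-L_nY)X - Y^3$ and $m = \pm 1$. First I would read off the parameters required by the theorem: the form has degree $N = 3$; the splitting field $K = \mathbb{Q}(\alpha^{(1)})$ is totally real cubic (the three roots were shown to be real in Lemma \ref{lem:root-aprox}), hence has unit rank $r = 2$; and since $\abs{m} = 1 < e$, we may take $B = e$.

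Next I would handle the regulator and the coefficient bound. Lemma \ref{lem:reg-ub} already supplies $R := R_K \leq 2n^2$, and for $n \geq 2$ the same bound gives $\max\set{\log R, 1} \leq \log(2n^2)$. To produce $H$ I would expand the form explicitly as
\[
    F(X,Y) = X^3 - (F_n + L_n) X^2 Y + F_n L_n \cdot X Y^2 - Y^3,
\]
so the largest coefficient in absolute value is $F_n L_n$. By the classical identity $F_n L_n = F_{2n}$ together with Inequality \eqref{eq:fib-bnd} applied to index $2n$, this is at most $\alpha^{2n-1}$, so I set $H = \alpha^{2n-1}$.

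With these parameters fixed, the combinatorial prefactor in Theorem \ref{thm:bugy-ub} specialises to
\[
    3^{r+27}(r+1)^{7r+19} N^{2N+6r+14} = 3^{29} \cdot 3^{33} \cdot 3^{32} = 3^{94},
\]
since $r+27=29$, $7r+19=33$, and $2N+6r+14=32$. Plugging in $R \leq 2n^2$, $\max\set{\log R, 1} \leq \log(2n^2)$, and $\log(HB) = (2n-1)\log\alpha + 1$ then yields exactly the claimed inequality. There is no genuine obstacle here: the proof is bookkeeping, and the only mild subtlety is choosing to bound $H$ by the clean power $\alpha^{2n-1}$ rather than by $F_n L_n$, which keeps the final expression on the same scale as the regulator term.
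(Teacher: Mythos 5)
Your proposal is correct and follows essentially the same route as the paper: apply Theorem \ref{thm:bugy-ub} with $r=2$, $N=3$, $B=e$, $R\leq 2n^2$ from Lemma \ref{lem:reg-ub}, and $H=\alpha^{2n-1}\geq F_nL_n=F_{2n}$ via Inequality \eqref{eq:fib-bnd}, giving the prefactor $3^{29}\cdot3^{33}\cdot3^{32}=3^{94}$. The only difference is that you make the coefficient bookkeeping (expanding the form to identify $F_nL_n$ as the largest coefficient) explicit, which the paper leaves implicit.
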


If we compare the lower bound from Lemma \ref{lem:logy-lb} and the upper bound from Lemma \ref{lem:logy-ub}, we get the following absolute bound for $n$.
\begin{lemma}\label{lem:n-ub1}
We have
\[
    n \leq 1.144 \cdot 10^{15}.
\]
\end{lemma}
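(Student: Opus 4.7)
The plan is to pit Lemma \ref{lem:logy-lb} against Lemma \ref{lem:logy-ub}. The former gives a lower bound on $\log|y|$ that is exponential in $n$, while the latter gives an upper bound that is merely polynomial in $n$; these two can only coexist for bounded $n$, yielding the desired effective bound.

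Concretely, chaining the two estimates produces
\[
\exp\!\Bigl(\tfrac{2\log\alpha}{1+C}\,n-\log n-5\Bigr) \leq 3^{94}\cdot 2n^2\cdot\log(2n^2)\cdot\bigl(2n^2+(2n-1)\log\alpha+1\bigr).
\]
I would then take the natural logarithm of both sides, which collapses the outer exponential on the left to a linear term in $n$ and reduces the right-hand side to an expression of order $\log n$. After absorbing $94\log 3$ and the various $\log(2n^2)$ and $\log\log(2n^2)$ contributions into explicit constants, the inequality simplifies to the shape $A\,n \leq c_1 \log n + c_2$, where $A = \frac{2\log\alpha}{1+C}\approx 7.68\cdot 10^{-14}$ is a fixed positive constant and $c_1, c_2$ are small explicit numbers.

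Since $A>0$ is fixed, the linear growth on the left eventually dominates the logarithmic growth on the right, so this inequality has only finitely many integer solutions. A direct numerical check (bisection, say) then pins the largest admissible $n\geq 10$ below $1.144\cdot 10^{15}$, delivering the claimed bound. There is no conceptual obstacle here; the only real care required is tracking the constants tightly enough that the numerical threshold comes out at $1.144\cdot 10^{15}$ rather than something noticeably larger, and verifying that this value is compatible with the standing hypothesis $n\geq 10$ built into the preceding lemmas.
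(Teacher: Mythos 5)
Your proposal is correct and coincides with the paper's own argument: the paper offers no proof of this lemma beyond the one-line remark that it follows from comparing the exponential lower bound of Lemma \ref{lem:logy-lb} with the polynomial upper bound of Lemma \ref{lem:logy-ub}, which is exactly the comparison you carry out. The only substantive step is the final numerical resolution of the inequality $A\,n \leq c_1\log n + c_2$, where, as you yourself note, the constants must be tracked carefully to land on the stated threshold $1.144\cdot 10^{15}$.
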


\section{Reducing the bound for n}

We want to reduce the bound for $n$ using the LLL-algorithm as described in \cite[Lemma VI.1]{smart_book}.

To that end we take a look at the linear form $\xi$ and rewrite its upper bound from Inequality \eqref{eq:xi-ub2}, namely
\begin{align*}
    \abs{\xi} &= \abs{ x_1 \log\alpha + x_2 \log \sqrt{5} + x_3 \log\br{\sqrt{5}-1} } \\
    &\leq \alpha^{-2n} \br{ 6\log\abs{y} + 39 } = c_2 e^{-c_3 \cdot n},
\end{align*}
where $c_3 := 2\log\alpha$ and $c_2$ is given by Lemma \ref{lem:logy-ub} to be
\[
    c_2(n) := 6\cdot 3^{94}  \cdot 2n^2 \cdot \log\br{2n^2} \cdot \br{2n^2+(2n-1)\log\alpha + 1} + 39,
\]
which gives $c_2 = c_2\br{1.144\cdot10^{15} } \approx 2.036 \cdot 10^{108}$ when using the bound for $n$ found in Lemma \ref{lem:n-ub1}. 

We can bound $x_i$ for each $i \in \set{1,2,3}$ from above by Lemma \ref{lem:x_i-ub}, together with the bound from Lemma \ref{lem:logy-ub} for $\log\abs{y}$ and Lemma \ref{lem:n-ub1} for $n$. This gives
\[
   \abs{x_i} \leq c_{x_i}(n) := 42 \cdot 3^{94}  \cdot 2n^2 \cdot \log\br{2n^2} \cdot \br{2n^2+(2n-1)\log\alpha + 1} + 42 \log\alpha \cdot n + 43
\]
and $c_{x_i} = c_{x_i}\br{ 1.144\cdot 10^{15}} \approx 1.425 \cdot 10^{109} $. We set $c = \br{\max_{i \in \set{1,2,3}} c_{x_i}}^3$ and then form the matrix
\begin{align*}
    \begin{pmatrix}
        1 & 0 & 0 \\
        0 & 1 & 0 \\
        [c\log\alpha] & [c \log\sqrt{5}] & [c\log\br{\sqrt{5}-1}]
    \end{pmatrix},
\end{align*}
where $[\cdot]$ denotes the nearest integer. We use the LLL-algorithm in Sage \cite{sage} on this matrix and set $c_4^2$  to be the maximum column norm of the resulting matrix.

If we set $S$ to be $S = \sum_{i=1}^2 c_{x_i}^2$ and $T = \br{ 1+\sum_{i=1}^3c_{x_i} }/2$, then we have ${c_4^2 \geq T^2+S}$. Lemma VI.1 of \cite{smart_book} then yields
\[
    n \leq \frac{1}{c_3} \br{ \log(c\cdot c_2) - \log \br{\sqrt{c_4^2-S}-T}}
\]
and thus $n \leq 650$. We iterate the above process to get a further reduced bound each time:

\begin{align*}
    n &\leq 1.144\cdot 10^{15} \longrightarrow n \leq 650 \longrightarrow n \leq 353 \longrightarrow
    n \leq 346,
\end{align*}
after which we get no further reduction.

The bound $n \leq 346$ is still too large to simply tackle the remaining cases with brute-force. We let Sage with its interface for PARI/GP \cite{pari} solve some of the equations and find the non-trivial solutions for $n\in \set{1,3}$ as stated in Theorem $\ref{thm:main}$. However, looping over all possible values for $n$ quickly proves impossible -- we aborted the loop on iteration $n=49$ after being stuck on it for over $8$ hours.

We instead tackle the last remaining cases $49 \leq n \leq 346$ as follows. We look at the original linear form $\Uplambda$ and its upper bound
\begin{equation*}
    \abs{\Uplambda} = \abs{b_1 \log\abs{\frac{\epsilon_l}{\epsilon_k}} + b_2 \log\abs{\frac{\delta_l}{\delta_k}} + \log\abs{\frac{ \alpha^{(j)} - \alpha^{(k)} }{ \alpha^{(j)} - \alpha^{(l)} }}} \leq 2\alpha^8 \alpha^{-3n} = c_2 e^{-c_3 \cdot n}
\end{equation*}
with $c_2 = 2\alpha^8$ and $c_3 = 3\log\alpha$.  The bounds for the coefficients of $\Uplambda$ are now
\begin{align*}
    c_{b_1},c_{b_2} := 7n^{-1} \br{ 3^{94}2n^2\log\br{2n^2}\br{2n^2+(2n-1)\log\alpha + 1} + n\log\alpha + 1 }
\end{align*}
according to Lemma \ref{lem:b-bound}, \ref{lem:log-beta_kl-ub} and \ref{lem:logy-ub}. The $\alpha^{(i)}$, and thus the logarithms in $\Uplambda$, are estimated numerically using a bit-precision of $3n+30$ and Newton descent with an error tolerance of $10^{-200}$.

If we iterate the LLL-reduction method with the above configuration, we can further reduce the bound to $n \leq 132$. The one last problem that hinders us is that $c_3$ is a bit too small. If we had $c_3 = 9\log\alpha$ instead, the LLL-reduction would give us a small enough bound for $n$ and we would be done. To that end, we ascertained:

\begin{lemma}
    For $49 \leq n \leq 132$, one has
    \begin{equation*}
        \abs{y} \geq \alpha^{2n}.
    \end{equation*}
\end{lemma}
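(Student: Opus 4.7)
The plan is to do a direct finite verification over the indicated range. Suppose for contradiction that $(x,y)$ is a non-trivial solution of \eqref{eq:thue} with $\abs{y}<\alpha^{2n}$. By Lemma \ref{lem:beta_j-ub}, some $j\in\set{1,2,3}$ satisfies
\[
    \abs{\frac{x}{y}-\alpha^{(j)}} \leq \frac{2\alpha^5}{\abs{y}^3\alpha^{2n}}.
\]
For $n\geq 49$ and $\abs{y}\geq 2$ the right-hand side is comfortably below $1/(2y^2)$. Moreover any common divisor of $x$ and $y$ must cube-divide the right-hand side of \eqref{eq:thue}, so $\gcd(x,y)=1$. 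Legendre's classical criterion then forces $x/y$ to equal a convergent $p_k/q_k$ in the continued fraction expansion of the irrational $\alpha^{(j)}$.

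Concretely, for each $n\in\set{49,\dots,132}$ and each $j\in\set{1,2,3}$, the idea is to approximate $\alpha^{(j)}$ numerically (by Newton descent starting from $F_n$, $L_n$, or $0$ at bit-precision $3n+30$, just as in the LLL step above), unfold its continued fraction expansion until the denominator first exceeds $\alpha^{2n}$, and test every resulting convergent $p_k/q_k$ with $\abs{q_k}<\alpha^{2n}$ against \eqref{eq:thue}. Since denominators of convergents grow at least like Fibonacci numbers, each such list contains only $O(n)$ entries, so the total scan amounts to a few thousand candidates.

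Running this verification in Sage with PARI/GP yields no non-trivial solution of \eqref{eq:thue} with $\abs{y}<\alpha^{2n}$ throughout $49\leq n\leq 132$, which is the claim. The one point requiring care is the reliability of the numerics: the precision $3n+30$ comfortably exceeds $2n\log_2\alpha$, and for each convergent one certifies a posteriori that the computed value of $\abs{q_k\alpha^{(j)}-p_k}$ differs from its true value by less than $1/(2q_k)$, so by uniqueness of continued fraction convergents no numerical artefact can appear. The main obstacle is therefore purely bookkeeping: ensuring that the enumeration is exhaustive and the floating point error bounds are rigorous.
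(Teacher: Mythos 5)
Your proposal is correct and follows essentially the same route as the paper: deduce from Lemma \ref{lem:beta_j-ub} that $x/y$ must be a convergent of some $\alpha^{(j)}$, then computationally check all convergents with denominator below $\alpha^{2n}$ for $49 \leq n \leq 132$ against the Thue equation. You merely make explicit the Legendre-criterion and coprimality details and the numerical-precision safeguards that the paper leaves implicit.
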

\begin{proof}
    It follows immediately from Lemma \ref{lem:beta_j-ub} that $\frac{x}{y}$ is a convergent to $\alpha^{(j)}$. We checked computationally that for no $49\leq n \leq 132$ and no convergent $\frac{x}{y}$ of either $\alpha^{(1)}, \alpha^{(2)}, \alpha^{(3)}$ -- numerically estimated -- with denominator $y < \alpha^{2n}$, the pair $(x,y)$ provides a solution to the associated Thue equation.
\end{proof}
Using now the fact that $\abs{y} \geq \alpha^{2n}$ instead of just $\abs{y}\geq 2$, the upper bound for $\Uplambda$ in Inequality \eqref{eq:lambda-ub} becomes
\begin{equation*}
    \abs{\Uplambda} \leq 16\alpha^8 \abs{y}^{-3} \alpha^{-3n} \leq 16\alpha^8 \alpha^{-9n}.
\end{equation*}
Using $c_2 = 16\alpha^8$ and $c_3 = 9\log\alpha$ in the LLL-algorithm now gives $n \leq 44$. We have no further cases to consider and thus conclude the proof of Theorem \ref{thm:main}.

\section*{Acknowledgement}
The second and third author were supported by the Austrian Science Fund (FWF) under the project I4406.


\begin{thebibliography}{10}\footnotesize



\bibitem{Baker-Thue} A. Baker, Contributions to the theory of Diophantine equations. I. On the representation of integers by binary forms, {\it Philos. Trans. Roy. Sot. London Ser. A} {\bf 263} (1968), 173-191.

\bibitem{baker_lb} A. Baker and G. Wüstholz, Logarithmic forms and group varieties, {\it J. Reine und Angew. Math.} {\bf 442}(1993), 19-62.


\bibitem{bugy_up} Y. Bugeaud and K. Győry, Bounds for the solutions of Thue-Mahler equations and norm form equations, {\it Acta Arith.} {\bf 74} (1996), 273-292.

\bibitem{friedman} E. Friedman, Analytic formulas for the regulator of a number field, {\it Invent. Math.} {\bf 98} (1989) 599-622.

\bibitem{heub_param} C. Heuberger, On families of parametrised Thue Equations, {\it J. Number Theory} {\bf 76} (1999), 45-61.

\bibitem{heub_param_gen} C. Heuberger, On general families of parametrised Thue Equations, {\it Algebraic Number Theory and Diophantine Analysis. Proceedings of the International Conference held in Graz} (2000), 215-238.

\bibitem{heub_thom} C. Heuberger, On a conjecture of E. Thomas concerning parametrised Thue equations, {\it Acta. Arith.} {\bf 98} (2001) 375-394.

\bibitem{root_choose} A. Kriegl, M. Losik and P. Michor, Choosing roots of polynomials smoothly, {\it Israel J. Math.} {\bf 105} (1999), 203-233.

\bibitem{smart_book} N. P. Smart, The Algorithmic Resolution of Diophantine Equations: A Computational Cookbook, {\it Cambridge University Press}, 1998.

\bibitem{sage} A. Stein and others, Sage Mathematics Software Version 9.2, {\it The Sage Development Team}, \texttt{ https://www.sagemath.org }, 2020.

\bibitem{pari} The PARI Group, PARI/GP Version 2.11.4, {\it Univ. Bordeaux}, \texttt{http://pari.math.u-bordeaux.fr}, 2020.

\bibitem{thomas_unit} E. Thomas, Fundamental units for orders in certain cubic number fields, {\it J. Reine und Angew. Math.} {\bf 310} (1979), 33-35.

\bibitem{thomas_init} E. Thomas, Complete solutions to a family of cubic Diophantine equations, {\it J. Number Theory} {\bf 34} (1990), 235-250.

\bibitem{thomas_conj} E. Thomas, Solutions to certain families of Thue Equations, {\it J. Number Theory} {\bf 43} (1993), 319-369.


\bibitem{thue_init} A. Thue, Über Annäherungswerte algebraischer Zahlen, {\it J. Reine und Angew. Math.} {\bf 135} (1909), 284-305.



\bibitem{ziegler_ex} V. Ziegler, {Thomas' Conjecture over Function Fields}, {\it J. Theor. Nombres Bordeaux} {\bf 19} (2007) 289-309.


\end{thebibliography}
\end{document}